\newtheorem{theorem}{Theorem}[section]
\newtheorem{remark}{Remark}[section]
\newtheorem{definition}{Definition}[section]
\newtheorem{lemma}[theorem]{Lemma}
\renewcommand{\div}{{\rm div \thinspace }}
\newcommand{\dis}{\displaystyle}
\newcommand{\bt}{\begin{theorem}}
	\newcommand{\bl}{\begin{lemma}}
		\newcommand{\el}{\end{lemma}}
	\newcommand{\et}{\end{theorem}}
\newcommand{\bn}{\begin{eqnarray}}
	\newcommand{\en}{\end{eqnarray}}
\newcommand{\bnn}{\begin{eqnarray*}}
	\newcommand{\enn}{\end{eqnarray*}}
\newcommand{\ba}{\begin{aligned}}
	\newcommand{\ea}{\end{aligned}}
\newcommand{\be}{\begin{equation}}
	\newcommand{\ee}{\end{equation}}
\newcommand{\Bv}{{\boldsymbol{v}}}
\newcommand{\Bn}{{\boldsymbol{n}}}
\newcommand{\Bu}{{\boldsymbol{u}}}
\newcommand{\Be}{{\boldsymbol{e}}}
\newcommand{\Bf}{{\boldsymbol{f}}}
\newcommand{\Btau}{{\boldsymbol{\tau}}}
\newcommand{\mcL}{\mathcal{L}}
\newcommand{\mfu}{\mathfrak{u}}
\newcommand{\mfv}{\mathfrak{v}}
\newcommand{\Bw}{\boldsymbol{w}}
\begin{document}
	
	\title
	[Rigidity of Steady Solutions to the Navier-Stokes Equations]
	{Rigidity of Steady Solutions to the Navier-Stokes Equations in High Dimensions and its applications}
	
	\author{Jeaheang Bang}
	\address{Department of Mathematics, The University of Texas at San Antonio, One UTSA Circle, San Antonio, Texas, 78249}
	\email{jeaheang.bang@utsa.edu}

	\author{Changfeng Gui}
	\address{Department of Mathematics, Faculty of Science and Technology, University of Macau, Taipa, Macao and   Department of Mathematics, The University of Texas at San Antonio, One UTSA Circle, San Antonio, Texas, 78249}
	\email{Changfeng.Gui@utsa.edu}
	
	\author{Hao Liu}
	\address{School of Mathematical Sciences and Institute of Natural Sciences, Shanghai Jiao Tong University, 800 Dongchuan Road, Shanghai, China}
	\email{mathhao.liu@sjtu.edu.cn}
	
	\author{Yun Wang}
	\address{School of Mathematical Sciences, Center for dynamical systems and differential equations, Soochow University, Suzhou, China}
	\email{ywang3@suda.edu.cn}

	\author{Chunjing Xie}
	\address{School of Mathematical Sciences, Institute of Natural Sciences,
		Ministry of Education Key Laboratory of Scientific and Engineering Computing,
		and CMA-Shanghai, Shanghai Jiao Tong University, 800 Dongchuan Road, Shanghai, China}
	\email{cjxie@sjtu.edu.cn}

	\begin{abstract}
		Solutions with scaling-invariant bounds such as self-similar solutions, play an important role in the understanding of the regularity and asymptotic structures of solutions to the Navier-Stokes equations.
		In this paper, we prove that any steady solution satisfying $|\Bu(x)|\leq C/|x|$  for any constant $C$ in $\mathbb{R}^n\setminus \{0\}$ with $ n \geq 4$, must be zero.
		Our main idea is to analyze the velocity field and the total head pressure via weighted energy estimates with suitable multipliers so that the proof is pretty elementary and short. These results not only give the Liouville-type theorem for steady solutions in higher dimensions with neither smallness nor self-similarity type assumptions, but also help to remove a class of singularities of solutions and give the optimal asymptotic behaviors of solutions at infinity in the exterior domains.
	\end{abstract}

	\keywords{}
	\subjclass[]{}

	\maketitle
	
	\section{Introduction and main results}

	Consider the $n$-dimensional steady Navier-Stokes equations
	\begin{align} \label{NS}
		-\Delta \Bu + (\Bu\cdot \nabla )\Bu + \nabla p=0, \quad \div \Bu=0,
	\end{align}
	where   the unknowns are velocity $\Bu$, an $n$-dimensional vector field and  pressure $p$,  a scalar function.
	Equations \eqref{NS} enjoy special properties: scaling invariance and rotation symmetry. If $(\Bu,p)$ is a solution to \eqref{NS}, then the scaled one
	$ (\lambda \Bu (\lambda x), \lambda^2p(\lambda x)) $
	is also a solution for every $\lambda>0$.
	This important property has proved useful to investigate the Navier-Stokes equations; see \cite{CKN, Lin, TianXin99, SverakTsai00, LiYang22, JiaSverak17} and references therein.
	Motivated by these properties,
	one can study the solutions with scaling-invariant bounds or the solutions in the spaces with scaling-invariant  norms such as $L^n$ and $L^{n,\infty}$ (the weak $L^n$ space) in $n$-dimensional case. Among these solutions, the most special ones are the self-similar solutions, which have been extensively studied  in both steady  and  unsteady cases (cf. \cite{Tsai18}). In this paper, we focus on the steady solutions.
	
	
	In the steady case, a vector field $\Bu$ is said to be  \emph{self-similar } (SS) if and only if for \emph{all} $\lambda>1$, it holds that
	$\Bu(x)=\lambda \Bu(\lambda x)$ in $\mathbb{R}^n \setminus \{0\}$.
	One can relax the requirement for $\lambda$ to consider a more general scaling-invariant class: a vector field $\Bu$ is called \emph{discretely self-similar} (DSS or $\lambda$-DSS) if and only if for \emph{some} $\lambda>1$, it holds that
	$\Bu(x)=\lambda \Bu(\lambda x)$ in $\mathbb{R}^n \setminus \{0\}$.
Furthermore, for $\phi\in \mathbb{R}$ and $\lambda>1$, define
 \begin{equation}\label{eq:RDSS}
      \Bu_{\lambda, \phi}(x)=\lambda \mathcal{R}(-\phi)\Bu(\lambda \mathcal{R}(\phi) x),
 \end{equation}
where, for simplicity, we only consider the rotation matrix around the $x_3$-axis and
 \begin{equation}\label{eq:Rotation}
			\mathcal{R}(\phi)=\left[
			\begin{aligned}
				&\cos \phi  	&-\sin \phi   \quad&0\\
				&\sin \phi  &\cos \phi  \quad&0\\
				&\quad 0      &0\quad\quad   &1\\
			\end{aligned}
			\right].
		\end{equation}
 A vector field $\Bu$ is said to be \emph{rotated self-similar} (RSS) and \emph{rotated discretely self-similar} (RDSS),  if for all $\lambda>1$ and some $\lambda>1$, respectively, $\Bu_{\lambda,\phi}(x)=\Bu(x)$ on $\mathbb{R}^n \setminus \{0\}$ with $\phi=2\alpha \log \lambda$ for some fixed $\alpha\in \mathbb{R}$. 
 For more discussions on these scaling invariant solutions, one can refer to \cite[Section 8.1]{Tsai18}.
	We  note that  there are nontrivial explicit examples of steady self-similar solutions in $\mathbb{R}^n\setminus \{0\}$ when $n=2,3$: Hamel solutions ($n=2$, \cite{GuillodWittwer15, Landau}), and Landau solutions ($n=3$, \cite{Sverak11, Tian98, Landau}).
	And they are the only ones among the self-similar solutions in the two and three-dimensional cases respectively (cf.\cite{Sverak11}).
	It is very interesting that for $n=2$   a class of rotated self-similar solutions which are not self-similar, has been constructed in \cite{GuillodWittwer15SIMA}. On the other hand, it is known that for $n\geq 4$, all self-similar solutions are trivial (cf.  \cite{Tsai98b}, \cite[Corollary in Section 2]{Sverak11}).
	
	Note that all these kinds of scaling-invariant vector fields $\Bu$ satisfy the  bound \begin{align}\label{eq:sacinv}
		|\Bu(x)|\leq \frac{C}{|x|}, ~  x \neq 0
	\end{align}
	for some constant $C>0$ provided that they are bounded on the sphere $|x|=1$ or the annulus $1\leq |x|\leq \lambda$.
	In this paper, we consider the solutions satisfying only the  bound \eqref{eq:sacinv} without  imposing any type of self-similarity. Note that the assumption \eqref{eq:sacinv} is equivalent to $|x|\Bu(x)\in L^\infty (\mathbb{R}^n\setminus \{0\})$.
	We denote this class of functions by $\mcL^\infty_r$. By the defintion, one can check that the following 
	 inclusion property  hold (\hspace{1sp}\cite{Tsai18}),
	\begin{align*}
		\text{SS}
		\subset
		\text{RSS}
		\subset
		\text{DSS}
		\subset
		\text{RDSS}
		\subset
		\mcL^\infty_r.
	\end{align*}
	Classifying the solutions which satisfy \eqref{eq:sacinv} has various implications in studying the singularity and  asymptotics of the solution to the Navier-Stokes system \eqref{NS}.
    Hence, motivated by the above-mentioned rigidity results for the self-similar solutions,
	\u Sver\'ak  raised a conjecture in \cite{Sverak11} for the three-dimensional case: whether any solution $\Bu$ satisfying \eqref{eq:sacinv} in $\mathbb{R}^3\setminus \{0\}$ is a Landau solution.

	The main objective of this paper is to consider the analogous conjecture in higher dimensions $n\geq 4$, i.e., to classify the solutions of the steady Navier-Stokes system \eqref{NS} which satisfy \eqref{eq:sacinv} in higher dimensions. In higher dimensional cases ($n\geq 4$),  no explicit example has been known in the class $\mcL^{\infty}_r$ other than the trivial solutions, and in particular, self-similar ones are trivial as mentioned above.
	Furthermore, it was proved in \cite{JiaSverak17} that the leading terms in the asymptotic expansions at infinity of finite energy solutions are determined by the fundamental solution to the linearized equations, i.e., the Stokes equations, at least when the dimension $n=4,5,6$.
	The first main result of this paper is to show rigorously that all solutions in  $\mcL^{\infty}_r$ are trivial in all  higher dimensional cases ($n\geq 4$);  see Theorem \ref{main}. The second main goal of this paper is to investigate the singularities and the asymptotics at large distances of  solutions to the Navier-Stokes system  based on this Liouville type result; see Theorems \ref{cor:regularity} and  \ref{thm:asy} for the precise statements.
	
	Our following main result shows the rigidity of steady solutions to the Navier-Stokes system \eqref{NS} in the class $\mcL_r^\infty$ in the case dimension $n\geq 4$.
\begin{theorem} \label{main}
Let $n\geq 4$ and $\Bu$ be a smooth solution of the steady Navier-Stokes system \eqref{NS} in $\Omega$ with $\Omega =\mathbb{R}^n\setminus \{0\}$ or $\mathbb{R}^n_+\setminus \{0\}$ where $\mathbb{R}^n_+$ is the upper half-space in $\mathbb{R}^n$. Then $\Bu\equiv 0$ in the following three cases.
\begin{enumerate}
   \item[(i)] $\Omega =\mathbb{R}^n\setminus \{0\}$  and $\Bu \in C^\infty(\Omega)\cap \mcL^{\infty}_r$, i.e.,
	there exists a constant $C>0$ such that $\Bu$ satisfies
	\begin{align} \label{eq4}
			|\Bu(x)|\leq \frac{C}{|x|}
			\quad \text{in }\Omega.      \end{align}

\item[(ii)]  $\Omega= \mathbb{R}^n_+\setminus\{0\}$ and $\Bu$ satisfies the Navier slip boundary condition
		\begin{align} \label{eq4_0}
		\Bu\cdot \Bn = 0, \ \ \ \Bn\cdot D(\Bu)\cdot \Btau = 0	 \quad \text{on }\partial\Omega \setminus \{0\}
		\end{align}
		and
		\eqref{eq4}
		for some constant $C>0$, where $D(\Bu)$ is the  tensor defined by 
  \begin{equation*}
      D(\Bu)_{i, j} = \frac12(\partial_i u_j + \partial_j u_i), 
  \end{equation*}
  $\Bn$ and $\Btau$ are the normal and tangential vector on $\partial \Omega$, respectively.

	\item[(iii)] $\Omega= \mathbb{R}^4_+\setminus\{0\}$ and $\Bu$ satisfies the Dirichlet boundary condition
		\begin{align} \label{eq4_1}
			\Bu=0 \quad \text{on }\partial\Omega \setminus \{0\}
		\end{align}
		and
		\eqref{eq4}
		for some constant $C>0$. 
\end{enumerate}

	\end{theorem}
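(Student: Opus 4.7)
The strategy is to combine a critically weighted energy identity for $\Bu$ with an analysis of the total head pressure $\Phi:=\tfrac{1}{2}|\Bu|^2+p$, exploiting the fact that $|x|^{-(n-2)}$ is harmonic on $\mathbb{R}^n\setminus\{0\}$. I describe the plan for case (i); the half-space cases (ii) and (iii) follow by the same scheme, with extra surface integrals on $\partial\mathbb{R}_+^n\setminus\{0\}$ that vanish (up to absorbable remainders) under \eqref{eq4_0} or \eqref{eq4_1}.

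First, a preparatory block. The rescaling $\Bv(y):=|x|\Bu(x+\tfrac{|x|}{2}y)$ together with interior Schauder estimates for \eqref{NS} upgrades $|\Bu|\le C/|x|$ to $|\nabla^k\Bu(x)|\le C_k/|x|^{k+1}$. Representing $p$ as a Newtonian potential of $\partial_i u_j\,\partial_j u_i$ and normalizing $p\to 0$ at infinity yields $|p(x)|\le C/|x|^2$, hence $|\Phi|\le C/|x|^2$ and $|\nabla\Phi|\le C/|x|^3$. A direct computation using $\operatorname{div}\Bu=0$ produces the key subharmonic-type identity
\begin{equation*}
\Delta\Phi-\Bu\cdot\nabla\Phi=2|A|^2,\qquad A:=\tfrac{1}{2}\bigl(\nabla\Bu-\nabla\Bu^\top\bigr),
\end{equation*}
so $\Phi$ is a subsolution of the drift Laplace operator.

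The core of the argument is to test \eqref{NS} against $\Bu/|x|^{n-2}$ and integrate over the annulus $A_{\veps,R}:=\{\veps<|x|<R\}$; the exponent $n-2$ is chosen precisely because $\Delta|x|^{-(n-2)}=0$. After two integrations by parts, using $\operatorname{div}\Bu=0$ and substituting the identity for $\Phi$, the $|\Bu|^2$, convective, and pressure terms regroup into the clean formula
\begin{equation*}
\int_{A_{\veps,R}}\frac{|\nabla\Bu|^2-2|A|^2}{|x|^{n-2}}\,dx \;=\; \int_{\partial A_{\veps,R}}\Bigl[p\,\partial_\nu|x|^{-(n-2)} - \frac{\partial_\nu p}{|x|^{n-2}}\Bigr]\,dS.
\end{equation*}
The surface integral on $\{|x|=R\}$ decays like $R^{-2}$ via Step 1, and the one on $\{|x|=\veps\}$ is controlled by the spherical average of $p$ and vanishes in the limit thanks to $n\ge 4$ combined with an averaging argument on $-\Delta p=\partial_i u_j\,\partial_j u_i$. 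This identity alone only equates two weighted norms of $\nabla\Bu$; to force $|A|\equiv 0$, a weighted Caccioppoli estimate for $\Phi$ is needed. Multiplying the head pressure equation by $\Phi\eta_{\veps,R}^2$ with a logarithmic cutoff $\eta_{\veps,R}$, integrating by parts, and using $|\Bu|\le C/|x|$ together with a Hardy-type inequality to absorb the drift term $\Bu\cdot\nabla\Phi$ into the good leading term, one concludes $\int|A|^2/|x|^{n-2}\to 0$ as $\veps\to 0$, $R\to\infty$, whence $|A|\equiv 0$.

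The rigidity conclusion then follows. Since $\mathbb{R}^n\setminus\{0\}$ is simply connected for $n\ge 3$, $|A|\equiv 0$ gives $\Bu=\nabla f$ for some harmonic $f$; expanding $f$ in spherical harmonics on the punctured space as $f=\sum_{k\ge 0}(a_k|x|^k+b_k|x|^{-(k+n-2)})Y_k(\omega)$ and imposing the pointwise bound $|\nabla f|=|\Bu|\le C/|x|$ rules out every nonconstant mode (each one either grows at infinity for $k\ge 1$ or blows up faster than $1/|x|$ at the origin when $n\ge 3$), so $f$ is constant and $\Bu\equiv 0$. The most delicate part of this plan is the weighted Caccioppoli coupled with the boundary cancellations at $|x|=\veps$: the drift is absorbable into the Hardy term only within a narrow range of weight exponents, and that range must simultaneously make the surface flux vanish; the assumption $n\ge 4$ is exactly what keeps this balance feasible. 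The restriction to $n=4$ in case (iii) reflects the parallel fact that Dirichlet data leaves surface integrals of $p$ and $\partial_\nu\Phi$ on $\partial\mathbb{R}^n_+$ whose scaling only closes in that dimension, whereas the Navier-slip condition in (ii) zeroes out the analogous fluxes in every $n\ge 4$.
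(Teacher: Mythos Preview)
Your proposal has concrete gaps at the inner boundary and in the Caccioppoli step.

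First, the weight choice: testing against $\Bu/|x|^{n-2}$ is not what the paper does, and it fails. Your boundary computation is correct in that the surface terms reduce to $\int_{\partial}[p\,\partial_\nu|x|^{-(n-2)} - |x|^{-(n-2)}\partial_\nu p]\,dS$, but the inner-sphere contribution is of order $\veps^{-2}$: with $|p|\sim \veps^{-2}$, $|\partial_\nu\phi|\sim \veps^{-(n-1)}$, and $|\partial B_\veps|\sim\veps^{n-1}$, each piece is $O(\veps^{-2})$. The averaging argument on $-\Delta p=\partial_i u_j\,\partial_j u_i$ only tells you that the spherical mean $\bar p$ solves a radial ODE with source $O(r^{-4})$; the particular solution is still $\sim r^{-2}$, and the exact combination $r\bar p'(r)+(n-2)\bar p(r)$ that appears is generically $\sim r^{-2}$, not $o(1)$. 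The paper's weight is instead $r^{4-n}$, chosen so that all boundary fluxes are $O(1)$; one then shows the fluxes are \emph{monotone} in $R$ and uses a separate integrability argument (Sobolev plus Lemma~\ref{lem:fasterdecay}) to prove they actually tend to $0$ along subsequences.

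Second, your Caccioppoli step does not yield what you claim. Multiplying $\Delta\Phi-\Bu\cdot\nabla\Phi=2|A|^2$ by $\Phi\,\eta^2$ produces the term $2\int|A|^2\,\Phi\,\eta^2$, whose sign is unknown since you have not established that $\Phi$ has a sign; it certainly does not produce the weight $|x|^{-(n-2)}$. Absorbing the drift $\Bu\cdot\nabla\Phi$ via Hardy requires \emph{smallness} of the constant in $|\Bu|\le C/|x|$, which is precisely what the theorem dispenses with. The paper's device is to multiply by the nonlinear test function $H_+^{(n-4)/2}$: this makes both bulk terms nonnegative, forces $H_+\equiv 0$ (hence $H\le 0$), and only then does a second weighted identity---testing the momentum equation against $\nabla r^{4-n}$---deliver the needed weighted integrability of $H$ to close the original $r^{4-n}$-weighted energy estimate.

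Two smaller points. Your displayed identity actually reads $\int(|\nabla\Bu|^2-2|A|^2)\phi = \int(|S|^2-|A|^2)\phi$ with $S$ the symmetric part, which is not a comparison of two norms; even granting $|A|\equiv 0$ you would still need the $\veps$-boundary to vanish, and it does not with your weight. For (ii) the paper does not argue via boundary-flux cancellation at all but reflects across $\{x_n=0\}$ (the Navier-slip condition is exactly what makes the even/odd extension smooth) to reduce to (i); for (iii) the paper simply tests against $\Bu$ with no weight, which closes precisely because $n=4$ makes all boundary terms scale-invariant.
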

	
	We have some remarks for this theorem.

\begin{remark}
	In Theorem \ref{main},  we need neither a smallness assumption on $\Bu$ nor  any type of self-similarity such as RSS, DSS, or RDSS.
\end{remark}

	\begin{remark}
		In the case $n\geq 4$,
		consider a solution $\Bu\in C^\infty (\mathbb{R}^n)$ to \eqref{NS} such that
		\begin{align*}
			|\Bu(x)|= O
			\left(
			\frac{1}{|x|^\alpha}
			\right)
			\quad \text{as }|x|\to\infty, \quad \alpha>0.
		\end{align*}
		As it was mentioned in \cite[Equation (X.9.29)]{Galdi11},
		$\Bu$ must be trivial, i.e., $\Bu\equiv 0$ provided that $\alpha>\frac{n-1}{3}$.
		In fact, with the aid of an iteration argument,
		one can show $|\Bu(x)|=O(1/|x|^{n-2})$ as long as $\alpha>1$ (a weaker condition for $n\geq 5$), so that the solution must be trivial by using energy estimates.

  On the other hand, in the borderline case $\alpha=1$, the same iteration argument no longer works.
   However,  one can utilize Theorem \ref{main} to conclude $\Bu \equiv 0$.  It is worth noting that  it is an outstanding open problem to prove the triviality of a solution $\Bu$ with $\nabla \Bu \in L^2 (\mathbb{R}^3)$ (cf. \cite{Galdi11}).
   
	\end{remark}

 \begin{remark}
		In \cite{Sverak11}, \u Sver\'ak raised a problem on the rigidity of steady {self-similar} solutions in $\mathbb{R}_+^n$ ($n\geq 3$) with the Dirichlet boundary condition. In dimension four, an affirmative answer for this problem was given  in \cite{Shi18}.
		Theorem \ref{main} proves the rigidity of four-dimensional steady solutions in half-space in the class $\mcL^{\infty}_r$ without any additional assumptions.
	\end{remark}
	
	\begin{remark}
		Using similar ideas in the proof of Theorem \ref{main}, one can also  show the triviality of the solutions in a cone in the four-dimensional space under no-slip boundary conditions and similar assumptions.
	\end{remark}

	 Theorem \ref{main} not only provides the rigidity of solutions to the steady Navier-Stokes system \eqref{NS}  but also helps to understand the regularity of weak solutions for the steady Navier-Stokes system. 
	 For readers' convenience and self-contained purpose, we first recall the definitions of weak and very weak solutions here.
	For a smooth  domain $\Omega \subset \mathbb{R}^n$, consider the steady Navier-Stokes equations with an external force $\boldsymbol{f}$:
	\begin{align} \label{NS_2}
		\begin{aligned}
			-\Delta \Bu + \Bu \cdot \nabla \Bu + \nabla p = \boldsymbol{f}, \quad \div \Bu =0\quad \text{in }\Omega.
			\\
		\end{aligned}
	\end{align}
	\begin{definition} \label{def_weak}
		For $\boldsymbol{f}\in L^\infty (\Omega)$, the pair $(\Bu,p)$ is called a \emph{weak solution} to \eqref{NS_2} in $\Omega$  if
		\begin{align*}
			\Bu \in W^{1,2}_{loc} (\Omega),
			\quad p \in W^{1,\frac{n}{n-1}}_{loc}(\Omega),
		\end{align*}
		and it satisfies
		\begin{align} \label{eq8}
			\int_{\Omega}
			\nabla \Bu : \nabla \boldsymbol{\varphi} \thinspace dx
			+ \int_\Omega \Bu \cdot \nabla \Bu \cdot \boldsymbol{\varphi}\thinspace dx
			+ \int_\Omega \nabla p \cdot  \boldsymbol{\varphi} \thinspace dx
			= \int_\Omega \boldsymbol{f} \cdot \boldsymbol{\varphi} \thinspace dx
		\end{align}
		for all vector fields $\boldsymbol{\varphi} \in C^\infty _0 (\Omega)$
		and \begin{align} \label{eq:divfree1}
			\int_{\Omega} \Bu\cdot \nabla \varphi \, dx=0
		\end{align}
		for all $\varphi \in C^\infty _0 (\Omega)$, where $\nabla \Bu : \nabla \boldsymbol{\varphi}= \sum_{i,j} \partial_i u_j \partial_i \varphi_j.$
		
		Furthermore, $\Bu\in L^2_{loc}(\Omega)$ is called a \emph{very weak solution} in $\Omega$ if $\Bu$ satisfies
		\begin{align} \label{eq9}
			-\int_\Omega
			(\Bu \cdot \Delta \boldsymbol{\varphi} + \Bu \cdot (\Bu \cdot \nabla )\boldsymbol{\varphi}) \, dx=\int_\Omega \boldsymbol{f} \cdot \boldsymbol{\varphi} \thinspace dx
		\end{align}
		for all  divergence-free vector fields $\boldsymbol{\varphi} \in C^\infty _0 (\Omega)$ and  \eqref{eq:divfree1} for all ${\varphi} \in C^\infty _0 (\Omega)$.
	\end{definition}
	For more explanations  on weak solutions of the steady Navier-Stokes system,  one can  refer to \cite{FrehseRuzicka94, Galdi11, Tsai18, Sverak11}. We focus on the case with zero external force from now on.
	For a solution $\Bu\in C^\infty (\mathbb{R}^n \setminus \{0\})$ to \eqref{NS} (i.e., $\boldsymbol{f}=0$)   belonging to $\mcL^{\infty}_r$,  one can show that $\Bu$ satisfies \eqref{eq9} across the origin if $n\geq 4$.
 Furthermore,  $\nabla \Bu\in L^2_{\mathrm{loc}}(\mathbb{R}^n)$ and $ p\in W^{1,\frac{n}{n-1}}_{\mathrm{loc}}(\mathbb{R}^n)$ if $n\geq 5$. And it satisfies \eqref{eq8} across the origin if  $n\geq 5$.
	  Hence, a solution $\Bu$  in $\mcL^{\infty}_r$    is a very weak solution in $\mathbb{R}^n$ across the origin  if $n\geq 4$ and is a weak solution in $\mathbb{R}^n$ across the origin  if $n\geq 5$.
	
	Proving better regularity of a very weak solution to \eqref{NS} (cf. Definition \ref{def_weak} above), which only belongs  to $L^2_{loc}$ by definition,  is one of the major open problems in the study of the steady Navier-Stokes equations; see \cite{Sverak11, Galdi11, Tsai18, FrehseRuzicka98, FrehseRuzicka96} and references therein.
	This regularity problem has got much attention because the  five-dimensional steady flows are  considered as a counterpart for the unsteady three-dimensional flows, whose regularity is a famous and challenging problem. For more explanations on this issue, one may refer to  \cite{Struwe95, Tsai18, Sverak11} and  references therein.
	
	The standard regularity theory (cf. \cite[Theorem IX.5.1]{Galdi11}, \cite{KimKozono06, Tsai18}) implies that a very weak solution $\Bu$ which also satisfies $\Bu\in L^n_{\mathrm{loc}}, n\geq 3,$ (or $\Bu \in L^{n+\varepsilon}_{\mathrm{loc}}$, for some $\varepsilon>0, n=2$) must be regular. 
	However, a solution in $\mcL^{\infty}_r$ does not meet this additional condition
	in any dimension and  can be considered a borderline case.
	Moreover, it was proved in \cite{KimKozono06,KPR20JMFM} that: for $n\geq 3$, if  $\|\Bu\|_{L^{n,\infty}(\Omega)}$ is sufficiently small, then $\Bu$ is regular in $\Omega$. A solution in $\mcL^{\infty}_r$ belongs to $L^{n,\infty}$, but it may not be small.
	Hence one can study the solutions in  $\mcL^{\infty}_r$ to examine possible scenarios of singularities, in particular, when they are large.
	On the other hand,  if a solution has  locally finite energy, i.e.,  $\nabla \Bu\in L^2_{loc}$,
	then $\Bu \in L^{2n/(n-2)}_{\mathrm{loc}}\subset L^n_{loc}$  when $n\leq 4.$
	Therefore, one can get the regularity of a solution with  finite local energy only if $n\leq 4$.
	Hence one may ask whether there is a singular  weak solution $\Bu$ with $\nabla \Bu \in L^2_{loc} (\mathbb{R}^n)$ in the case $n\geq 5$.
	One should note that the solutions in $\mcL^{\infty}_r$  have locally finite energy when $n\geq 5$. Hence
	a particularly interesting case is $n=5$ as five is the smallest dimension when such a solution has locally finite energy.
	Therefore, as pointed out in \cite{Sverak11}, the singular behavior of order $1/|x|$ is most severe in $n=5$ among the high dimensions $n\geq 5$.
	
	As mentioned before, 
it follows from  \cite{Tsai98b} and \cite[Corollary in Section 2]{Sverak11}) that
	if $\Bu\in C^\infty(\mathbb{R}^n\setminus \{0\})$  is a steady self-similar  solution to \eqref{NS} in  $\mathbb{R}^n\setminus\{0\}$, then $\Bu \equiv 0$ ($n\geq 4)$ or $\Bu$ is a Landau solution ($n=3$). Note that the Landau solutions solve ${\eqref{NS}}_1$ with the right-hand side replaced by a singular force which is a multiple of Dirac measure. In other words, the Landau solutions  do not solve  the Navier-Stokes system \eqref{NS} across the origin in $\mathbb{R}^3$.
 Thus one can conclude that all the $(-1)$-homogeneous very weak solutions of the Navier–Stokes equations in $\mathbb{R}^n$ ($n\geq 3$) are trivial.
 This  eliminates the simplest conceivable singularity which belongs to $\mcL^{\infty}_r$.
	Then \u Sver\'ak in \cite{Sverak11} raised an interesting question:
	\emph{whether one can have a nontrivial very weak solution $\Bu$ in $\mathbb{R}^n$  which is smooth away from the
		origin  satisfying $|\Bu(x)|\leq C/|x|$ in $\mathbb{R}^n$.}
	Theorem \ref{main} resolves this problem for all higher dimensional cases $n\geq 4$.


	As an  application of Theorem \ref{main}, we can show that for solutions of steady Navier-Stokes system \eqref{NS} in dimensions $n\geq 4$,   the singularity at the origin  which behaves as  $\frac{1}{|x|}$ is removable.  This removes the smallness assumption appeared in \cite[Theorem 4]{KimKozono06} and \cite{KPR20JMFM}.
	\begin{theorem}\label{cor:regularity}
		Let $\Bu$ be a smooth solution to \eqref{NS}  in $B_1\setminus\{0\}$, $n\geq 4$. Assume $\Bu$ satisfies
		\begin{equation}\label{cor1_cond1}
			|\Bu(x) |\leq \frac{C}{|x|}\quad \text{for all}\, \,x\in B_1\setminus\{0\},
		\end{equation}
		then $\Bu$ is regular in $B_1$.    
	\end{theorem}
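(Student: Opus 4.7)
The plan is to reduce to the Liouville statement of Theorem \ref{main}(i) by a blow-down argument, and then to invoke a small $L^{n,\infty}$-norm regularity criterion (\cite{KimKozono06}, see also \cite{KPR20JMFM}). Scaling invariance of \eqref{NS} means that for each $\lambda\in (0,1)$ the rescaled pair $\Bu_\lambda(x):=\lambda \Bu(\lambda x)$, $p_\lambda(x):=\lambda^2 p(\lambda x)$ is a smooth solution on $B_{1/\lambda}\setminus\{0\}$, and the bound \eqref{cor1_cond1} is scale invariant, giving $|\Bu_\lambda(x)|\leq C/|x|$ on $B_{1/\lambda}\setminus\{0\}$ with the same constant.

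For any sequence $\lambda_k\downarrow 0$, I would combine this uniform pointwise bound with standard interior regularity for the steady Navier-Stokes system away from the singularity to obtain uniform $C^m_{\mathrm{loc}}$ estimates for $\Bu_{\lambda_k}$ on compact subsets of $\mathbb{R}^n\setminus\{0\}$. A diagonal subsequence then converges in $C^2_{\mathrm{loc}}$ to a smooth limit $\Bu_\infty$ which solves \eqref{NS} on $\mathbb{R}^n\setminus\{0\}$ and still satisfies $|\Bu_\infty(x)|\leq C/|x|$. Theorem \ref{main}(i) applied to $\Bu_\infty$ forces $\Bu_\infty \equiv 0$. I would then upgrade this to the uniform decay $|x|\,|\Bu(x)|\to 0$ as $x\to 0$ by contradiction: if $|x_k|\,|\Bu(x_k)|\geq \varepsilon$ along some $x_k\to 0$, then setting $\lambda_k:=|x_k|$ and $y_k:=x_k/|x_k|$ yields $|\Bu_{\lambda_k}(y_k)|\geq \varepsilon$ with $|y_k|=1$, and extracting $y_k\to y_\infty$ with $|y_\infty|=1$ and applying the previous step along $\lambda_k$ gives $|\Bu_\infty(y_\infty)|\geq \varepsilon$, contradicting $\Bu_\infty\equiv 0$.

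Finally, the decay $|x|\,|\Bu(x)|=o(1)$ as $x\to 0$ implies $\|\Bu\|_{L^{n,\infty}(B_r)}\to 0$ as $r\to 0$. As remarked after Definition \ref{def_weak}, the hypothesis $|\Bu(x)|\leq C/|x|$ with $n\geq 4$ forces $\Bu$ to be a very weak solution across the origin. Choosing $r$ small enough that $\|\Bu\|_{L^{n,\infty}(B_r)}$ falls below the threshold in \cite[Theorem 4]{KimKozono06} (or the corresponding result in \cite{KPR20JMFM}) then yields smoothness of $\Bu$ throughout $B_r$, and hence throughout $B_1$. The main obstacle is the blow-up compactness step: one must verify that the pointwise bound $|\Bu_{\lambda_k}(x)|\leq C/|x|$ is strong enough to pass to the limit in both the convective term $(\Bu_{\lambda_k}\cdot\nabla)\Bu_{\lambda_k}$ and in the pressure (after a suitable normalization $p_{\lambda_k}(x_0)=0$ at some fixed $x_0\neq 0$), so that the resulting $\Bu_\infty$ is a bona fide smooth solution of \eqref{NS} on $\mathbb{R}^n\setminus\{0\}$ in the class $\mcL^\infty_r$ to which Theorem \ref{main} applies.
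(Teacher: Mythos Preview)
Your proposal is correct and follows essentially the same route as the paper: a blow-up (or blow-down) rescaling $\Bu_{\lambda_k}(x)=\lambda_k\Bu(\lambda_k x)$, uniform local regularity estimates away from the origin, compactness to a limit $\Bu_\infty\in\mcL^\infty_r$ on $\mathbb{R}^n\setminus\{0\}$, and Theorem~\ref{main}(i) to force $\Bu_\infty\equiv 0$, yielding a contradiction with a hypothetical sequence $|x_k|\,|\Bu(x_k)|\geq\varepsilon$. The only organizational difference is that the paper invokes \cite{KimKozono06} at the outset in contrapositive form (``if $\Bu$ is singular at $0$ then such a sequence exists''), whereas you first establish $|x|\,|\Bu(x)|\to 0$ and then apply the small-$L^{n,\infty}$ criterion from \cite{KimKozono06,KPR20JMFM}; these are logically equivalent uses of the same result. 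Your worry about the compactness step is unfounded: the uniform bound $|\Bu_{\lambda_k}(x)|\leq C/|x|$ together with the interior regularity for the Stokes system (exactly as in Lemma~\ref{lemma21}) gives uniform $C^m$ bounds on compact subsets of $\mathbb{R}^n\setminus\{0\}$, and the pressure can be normalized and recovered locally, so no additional idea is needed there.
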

We have a few remarks about Theorem \ref{cor:regularity}.
		\begin{remark}
		Note that in Theorem \ref{cor:regularity}, we  assume neither integrability of $\Bu$ or its derivative nor the smallness of the constant $C$ in \eqref{cor1_cond1}.
		Moreover, the domain $B_1$ in Theorem \ref{cor:regularity} can be replaced by any fixed ball $B_R$, $R>0$.
	\end{remark}

		\begin{remark}
A similar problem   for the case $n=3$ is open, it is necessary to require the solutions to solve  the Navier-Stokes system \eqref{NS} across the origin to obtain its regularity on $B_1\setminus\{0\}$ in view of the Landau solutions.
   This problem is also closely related to the rigidity of the Landau solutions, which has been already explained. Moreover, the leading order singular behavior at the origin of a solution defined on $B_1 \setminus\{0\}\subset\mathbb{R}^3$ is also given by Landau solutions if the solution
	satisfies \eqref{cor1_cond1} with a small constant $C$ (cf. \cite{MiuraTsai12}).
	\end{remark}

	
	\begin{remark}
		
		Let $\boldsymbol{f}\in L^\infty (\Omega)$, where  $\Omega$ is a bounded domain in $\mathbb{R}^n$, $n\geq 5 $ with $\partial \Omega \in C^{0,1}$. It was proved in  \cite[Theorem 1]{FrehseRuzicka98} that every weak solution $\Bu \in W^{1,2}_0 (\Omega)$ to \eqref{NS_2} 
   must be regular,  provided
     that $\Bu$ satisfies  the Dirichlet boundary condition $\Bu =0$ on $\partial \Omega$ and the integrability condition $\Bu \in L^q (\Omega)$ with $q\geq 4$ and $q>n/2$.
		The singular behavior $|\Bu(x)|\leq C/|x|$ satisfies the $ L^q$ condition near $x=0$, but the Dirichlet boundary condition is not met in Theorem \ref{cor:regularity}. Hence one may not directly  \cite[Theorem 1]{FrehseRuzicka98} to remove the singularity at $x=0$ in Theorem \ref{cor:regularity}.
	\end{remark}


Besides  helping to understand the regularity of solutions, the rigidity of steady solutions also plays an important role in characterizing the asymptotic behaviors of the solutions. As another application of Theorem \ref{main}, we can prove the following theorem, which asserts that the solutions in exterior domains bounded by  $\frac{C}{|x|}$ must decay at an optimal rate $\frac{C}{|x|^{n-2}}$.
	\begin{theorem}\label{thm:asy}
		Let $\Bu$ be a smooth solution to the Navier-Stokes equations \eqref{NS} in $\mathbb{R}^n \setminus B_1 $, $n\geq 4$. If $\Bu$  satisfies
		\begin{equation}\label{corollary1-condition}
			|\Bu(x)|\leq \frac{C}{|x|} \quad \text{for all}\,\, x\in \mathbb{R}^n\setminus B_1 
		\end{equation}
		for some constant $C>0$, then
		there exists a constant $\tilde{C}>0$ such that
		\be \label{3-2-1}
		|\Bu(x)| \leq \frac{\tilde{C}}{|x|^{n-2}}\quad \textrm{for all  } x\in \mathbb{R}^n\setminus B_1.
		\ee
	\end{theorem}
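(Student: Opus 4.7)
The plan has two steps. First, a blow-down/compactness argument based on Theorem \ref{main} upgrades the standing hypothesis $|\Bu(x)|\leq C/|x|$ to the subcritical decay $|\Bu(x)|=o(1/|x|)$ as $|x|\to\infty$. Second, a Stokes-representation bootstrap converts this subcritical decay into the optimal rate $O(1/|x|^{n-2})$.

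For the first step, introduce the blow-down family $\Bu_\lambda(x)=\lambda\Bu(\lambda x)$, defined for $\lambda\geq 2$ and $|x|>1/\lambda$. Each $\Bu_\lambda$ solves \eqref{NS} by the scaling invariance, and \eqref{corollary1-condition} yields the uniform bound $|\Bu_\lambda(x)|\leq C/|x|$. Standard interior regularity for the Navier-Stokes system applied to this uniformly bounded family produces uniform $C^k$ bounds on every compact subset of $\mathbb{R}^n\setminus\{0\}$. By Arzel\`a-Ascoli, every sequence $\lambda_j\to\infty$ has a subsequence along which $\Bu_{\lambda_{j_k}}\to \Bu_\infty$ smoothly on compact subsets of $\mathbb{R}^n\setminus\{0\}$, with $\Bu_\infty$ a smooth solution of \eqref{NS} on $\mathbb{R}^n\setminus\{0\}$ satisfying $|\Bu_\infty(x)|\leq C/|x|$. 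Theorem \ref{main}(i) then forces $\Bu_\infty\equiv 0$. Since every subsequence has the same limit, $\Bu_\lambda\to 0$ in $C^k_{\text{loc}}(\mathbb{R}^n\setminus\{0\})$; specializing to $|x|=1$ gives $\lim_{|y|\to\infty}|y||\Bu(y)|=0$.

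For the second step, rewrite \eqref{NS} as a Stokes system with forcing $\Bf:=-(\Bu\cdot\nabla)\Bu$. Interior gradient estimates together with $|\Bu|=o(1/|x|)$ yield $|\nabla\Bu(x)|=o(1/|x|^2)$, hence $|\Bf(x)|=o(1/|x|^3)$. In the exterior $\{|x|>1\}$ one represents $\Bu$ as the convolution of the Stokes fundamental solution $E(x)=O(|x|^{2-n})$ with $\Bf$ plus boundary potentials over $\partial B_1$; the latter are $O(|x|^{2-n})$ at infinity. A convolution estimate then shows that any power-law bound $|\Bu(x)|\leq C|x|^{-\alpha}$ with $\alpha>1$ self-improves to $|\Bu(x)|\leq C|x|^{-\min(n-2,\,2\alpha-1)}+O(|x|^{2-n})$, so iterating finitely many times drives $\alpha$ up to $n-2$; this is the iteration indicated in the remark after Theorem \ref{main} and worked out in detail in classical references such as \cite{Galdi11}.

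The main obstacle is that the qualitative decay $|\Bu(x)|=o(1/|x|)$ produced by the blow-down is weaker than any power bound $|x|^{-(1+\delta)}$, so the convolution iteration above does not launch directly from the subcritical regime. A natural remedy is to quantify the convergence $\Bu_\lambda\to 0$ in $C^0_{\text{loc}}(\mathbb{R}^n\setminus\{0\})$: combining the rate of this convergence with a Caccioppoli-type estimate on dyadic annuli $\{R\leq|x|\leq 2R\}$ upgrades the little-o to an algebraic rate $\sup_{|x|=R}|\Bu(x)|\leq C R^{-1-\delta}$ for some $\delta>0$, after which the Stokes bootstrap closes and yields \eqref{3-2-1}.
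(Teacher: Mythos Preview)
Your Step 1 matches the paper's Lemma \ref{corollary-2-1}: blow-down plus Theorem \ref{main} gives $|\Bu(x)|=o(1/|x|)$.

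Your Step 2, however, has a genuine gap that you correctly identify but do not close. The convolution iteration needs a \emph{power} bound $|\Bu(x)|\le C|x|^{-\alpha}$ with $\alpha>1$ to start, while Step 1 delivers only $o(1/|x|)$. Your proposed remedy---``quantify the convergence $\Bu_\lambda\to 0$''---cannot work as stated: the convergence in Step 1 comes from a soft compactness argument (Arzel\`a--Ascoli plus uniqueness of the limit), and such arguments carry no rate. Indeed, the rate of convergence of $\Bu_\lambda\to 0$ on $\{|x|=1\}$ is precisely $\sup_{|y|=\lambda}|y|\,|\Bu(y)|$, so ``quantifying'' it is the very thing you are trying to prove. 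Caccioppoli estimates bound derivatives by the function on larger sets and give no mechanism to turn $o(1/|x|)$ into $|x|^{-1-\delta}$.

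The paper closes the argument with a different second step (Lemma \ref{epsilon-R}): an $\epsilon$-regularity result saying that if $|\Bu(x)|\le \epsilon/|x|$ on $\mathbb{R}^n\setminus B_R$ for a fixed small absolute constant $\epsilon$, then $|\Bu(x)|\le CR^{n-3}/|x|^{n-2}$. This is proved by extending $\Bu$ (via cutoff and Bogovskii correction) to a divergence-free field $\tilde\Bu$ on all of $\mathbb{R}^n$ solving \eqref{NS} with a forcing $\Bf$ of size $O(\epsilon)$ supported essentially in $B_3$; constructing by a contraction-mapping argument a solution $\Bw$ in the weighted space $X_{n-2}=\{\sup(1+|x|)^{n-2}|\Bw|<\infty\}$ with the same forcing; and finally showing $\tilde\Bu=\Bw$ by a uniqueness estimate in the scaling-critical space $X_1=\{\sup(1+|x|)|\Bw|<\infty\}$, which is exactly where the smallness of $\epsilon$ is used. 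The key conceptual difference from your plan is that \emph{smallness in the critical norm}---which is precisely what $o(1/|x|)$ provides on $\{|x|>R\}$ once $R$ is large---suffices; no subcritical starting exponent is needed, and there is no iteration.
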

	There are several remarks for Theorem \ref{thm:asy}.
	\begin{remark}
	Note that in  Theorem \ref{thm:asy}, we assume neither square integrability of $\nabla \Bu$ (i.e., a finite energy condition) nor the smallness of the constant $C$ in \eqref{corollary1-condition}.
	Moreover, $B_1$ appeared in Theorem \ref{thm:asy} can be replaced by any fixed ball $B_R$, $R>0$.
\end{remark}
	\begin{remark}
	The decay rate \eqref{3-2-1} is optimal in the sense that the fundamental solutions of the Stokes system decay in the same way. On the other hand, using the Navier-Stokes system with compactly supported external force $f$,  one can easily construct solutions whose leading terms are $G*f$, which decays at the rate $\frac{1}{|x|^{n-2}}$. Here, $G$ is the fundamental solution of the Stokes system. This also shows that the decay rate \eqref{3-2-1} is optimal in general.
\end{remark}

\begin{remark}
		We should mention that the asymptotic behaviors of  solutions for the steady Navier-Stokes system in the two or three-dimensional case should be very different. For example, Landau solutions are proved to be the leading order term in the asymptotic expansion near infinity of a small solution in the three-dimensional exterior domain in \cite{KorolevSverak11}.
	The asymptotic behaviors of the solutions at infinity in two-dimensional exterior domains are much harder and seem largely open, since one needs to find more self-similar solutions other than what has been found so far, to parameterize the general asymptotic behavior  as pointed out in \cite{GuillodWittwer15}.
\end{remark}



	The major technique for the proof of Theorem \ref{main}
	is to establish weighted energy estimates for \eqref{NS} and the equation of  the total head pressure  $H=\frac{|\Bu|^2}{2}+p$ (see Equation \eqref{heq2} below for instance) over annulus.
	Then we can establish  the  integrabilities of certain crucial quantities, which imply a faster decay   at infinity and a milder blow-up rate at the origin of the solution compared to what was initially assumed. This eventually leads to the triviality of $\Bu$.

	The proof for the problem in the four-dimensional case seems a little bit simpler. We can prove that $\nabla \Bu \in L^2 (\mathbb{R}^4)$, which implies faster decay and milder blow-up of $\Bu$ and leads to the triviality of $\Bu$.
	In the case dimension $n\geq 5$,
	one cannot directly get some integrability of the velocity $\Bu$ and its gradient due to an additional term involving the total head pressure $H$.
	To handle this term, we first prove the non-positivity of  $H$ by using the governing equation for the total head pressure (cf. \eqref{heq2}). Then we choose suitable multipliers and use weighted energy estimates
	to prove some weighted integrability of $H$ in  $\mathbb{R}^n$, which eventually leads to $\Bu \equiv 0$. The proof of Theorems \ref{cor:regularity} and  \ref{thm:asy} are based on the  blow-up arguments and Theorem \ref{main}.

	The rest of the  paper is organized as follows. Section \ref{Prelim} is devoted to  preliminary  estimates for the velocity and the pressure. Next, the rigidity of steady solutions, i.e., Theorem \ref{main}, is proved in Section \ref{4d}.  In  Section \ref{sec:appl}, we apply Theorem \ref{main} to prove  the  singularity of steady solutions  at the origin bounded by  $\frac{C}{|x|}$ are removable (Theorem \ref{cor:regularity}) and asymptotic behaviors of solutions in the exterior domains (Theorem \ref{thm:asy}).
	 An elementary yet  useful lemma, which asserts the limiting behaviors of a non-negative function at the origin and far-field when it is integrable on the positive half line, is  presented in Appendix \ref{appen1}.
	\section{Preliminaries} \label{Prelim}
	This section is devoted to some preliminary estimates  regarding the behaviors  of the velocity and the pressure and their derivatives, which play an important role in the  proof of  Theorem \ref{main}.



	\begin{lemma} \label{lemma21}
		Let $\Bu\in C^\infty (\mathbb{R}^n\setminus \{ 0\}), n\geq 4,$  be a solution to \eqref{NS} in $\mathbb{R}^n \setminus \{0\}$,  which satisfies \eqref{eq:sacinv}.
		Then it holds that for some constants $C_l$
		\begin{align} \label{eq13}
			|\nabla ^l \Bu(x)| \leq \frac{C_l}{|x|^{1+l}} , \quad
			|\nabla^{l}p| \leq \frac{C_l}{|x|^{2+l}}
			\quad \text{in }\mathbb{R}^n \setminus \{0\}, \quad l =0,1,2,\ldots
		\end{align}
		where the pressure  $p$ is defined up to a constant.
	\end{lemma}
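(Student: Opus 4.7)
The natural approach is a scaling/bootstrap argument exploiting the scaling invariance of \eqref{NS}. Fix an arbitrary point $x_0 \in \mathbb{R}^n\setminus\{0\}$ and set $r = |x_0|/2$. I will rescale by defining
\begin{equation*}
    \Bv(y) = r\Bu(x_0 + ry), \qquad q(y) = r^2 p(x_0 + ry), \quad y \in B_1(0).
\end{equation*}
Since \eqref{NS} is invariant under this scaling, $(\Bv, q)$ is a smooth solution of the Navier-Stokes system on $B_1(0)$. The key observation is that for $|y|\leq 1$ we have $|x_0 + ry| \geq |x_0| - r = r$, so the assumption \eqref{eq:sacinv} yields $|\Bv(y)| = r|\Bu(x_0+ry)| \leq rC/|x_0+ry| \leq C$ on $B_1$, uniformly in $x_0$.

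Next I apply standard interior regularity for the stationary Navier-Stokes system. Since $|\Bv|$ is bounded on $B_1$, the nonlinear term $(\Bv\cdot\nabla)\Bv$ can be absorbed into the Stokes framework: subtracting the mean $\bar q$ on $B_{3/4}$ so that $q - \bar q$ has zero average, elliptic estimates for the Stokes system give $\|\Bv\|_{W^{2,r}(B_{3/4})} + \|q - \bar q\|_{W^{1,r}(B_{3/4})} \leq C_r$ for any $r<\infty$. Embedding into Hölder spaces and iterating the bootstrap as many times as desired, one obtains, for every $l \geq 0$,
\begin{equation*}
    |\nabla^l \Bv(0)| + |\nabla^l(q-\bar q)(0)| \leq C_l,
\end{equation*}
with a constant $C_l$ that does not depend on $x_0$. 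Undoing the scaling gives $\nabla^l \Bu(x_0) = r^{-(1+l)}\nabla^l \Bv(0)$ and $\nabla^l p(x_0) = r^{-(2+l)}\nabla^l q(0)$ for $l\geq 1$, and since $r = |x_0|/2$ this is exactly \eqref{eq13} for $\nabla^l \Bu$ and for $\nabla^l p$ with $l\geq 1$.

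To handle the $l = 0$ case of the pressure, I use the freedom in fixing the additive constant. The gradient bound $|\nabla p(x)| \leq C/|x|^2$ established above is integrable along any ray going to infinity (since $n\geq 4$, in fact this only requires the radial integrability $\int_R^\infty s^{-2}\,ds<\infty$), so the limit of $p$ along each ray exists and, by integrating $\nabla p$, one checks that $p(x) \to 0$ as $|x|\to\infty$ for a unique choice of the additive constant. Integrating $|\nabla p| \leq C/|x|^2$ from $x$ to infinity along a radial ray then gives $|p(x)| \leq C/|x|$. A second application of the same scaling-bootstrap argument, but now using this pointwise bound on $p$ (with the normalization fixed) rather than the zero-mean normalization, upgrades the bound to $|p(x)| \leq C/|x|^2$, completing \eqref{eq13}. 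No significant obstacle is expected here; the only care is in tracking the normalization of $p$, since the argument is otherwise a clean application of scaling invariance and interior regularity for the stationary Navier-Stokes system.
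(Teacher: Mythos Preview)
Your approach is correct and matches the paper's: a scaling/local regularity argument for the derivative bounds (the paper simply cites Galdi's Lemma~X.9.2 for this), followed by an integration of the gradient bound on $p$ to fix the additive constant and recover the $l=0$ pressure estimate. The paper implements the last step via dyadic sphere averages rather than a direct ray integration, but the idea is the same.

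There is, however, a slip in your final paragraph. Your own scaling computation gives $\nabla^l p(x_0) = r^{-(2+l)}\nabla^l q(0)$, so for $l=1$ you have $|\nabla p(x)| \le C/|x|^{3}$, not $C/|x|^{2}$. With the correct exponent, the radial integration
\[
|p(x)| \le \int_{|x|}^{\infty} \frac{C}{s^{3}}\,ds = \frac{C}{2|x|^{2}}
\]
already yields the desired $|p(x)|\le C/|x|^{2}$ in one step. The ``second application of the scaling-bootstrap argument'' you propose is therefore unnecessary, and as written it would not in fact improve the bound: if one only inputs $|p|\le C/|x|$, then $|q(y)| = r^{2}|p(x_0+ry)| \le Cr$ on $B_1$, which is not uniform in $r$, so the local estimate $|q(0)-\bar q|\le C$ cannot be leveraged to recover $|p(x_0)|\le C/|x_0|^{2}$. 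Just correct the exponent and drop the extra bootstrap.
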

	\begin{proof}
		One can just follow the proof of   \cite[Lemma X.9.2]{Galdi11} word for word to get ${\eqref{eq13}}$ for $l\geq1$ (but not the estimate of $p$ when $l=0$).  When going through the proof there, note that we do not need  $|x|$ to be large because the corresponding external force $\boldsymbol{f}=0$ and $|\Bu|\leq C/|x|$ in the entire space in our situation. Hence for  $ l\geq 1$,  the estimate {\eqref{eq13}} holds for all $x\in \mathbb{R}^n\setminus\{0\}$ rather than just large $x$.
		In fact, the argument was originally developed   in \cite{SverakTsai00}.
		
		Now it remains to prove the estimate for the pressure $p$. It is noted that
		\begin{equation}\label{estgradp}
			|\nabla p(x) | \leq \frac{C_1}{|x|^{3}}, \ \ \ \text{for all }x\in \mathbb{R}^n \setminus \{0\}.
		\end{equation}		 
		 Let
		\begin{equation}\nonumber
			\bar{p}_k = \frac{1}{|\partial B_{2^k} |}
			\int_{\partial B_{2^k} } p (x) \, d\sigma,\ \ \ \ \ \text{for}\,\, k\in \mathbb{Z}.
		\end{equation}
	The straightforward computations yield
		\begin{equation}\nonumber
			\begin{aligned}
			| \bar{p}_{k+1} - \bar{p}_k | = &	\left| \frac{1}{|\partial B_{2^{k+1}} | } \int_{\partial B_{2^{k+1}} } p(x) d\sigma  - \frac{1}{|\partial B_{2^k} | } \int_{\partial B_{2^k} } p(x)  \, d\sigma   \right|\\ 
			= & \left| \frac{1}{|\partial B_{2^k} | } \int_{\partial B_{2^k} } (p(2x) - p(x)  )\, d\sigma   \right| \\
			\leq & C  2^{-2k} ,
			\end{aligned}
		\end{equation}
	where the estimate \eqref{estgradp} has been used to get the last inequality. This, in particular, implies that $\{\bar{p}_k\}$ has a limit as $k\to +\infty$. 
		Subtracting $p$ by some constant, we can assume that
		\begin{equation}\nonumber
			\bar{p}_{k} \rightarrow 0\,\, \,\, \text{as}\,\, k\to +\infty \, \, \, \, \mbox{and} \, \, \, \, |\bar{p}_k| \leq C  2^{-2k}\,\, \text{for all } k\in \mathbb{Z}.
		\end{equation}
		There is a point $x^k$ on the sphere $\partial B_{2^k} $, such that $p(x^k)= \bar{p}_k$ and hence $ |p(x^k)|\leq C2^{-2k}$.
		For any $ x \in B_{2^{k+1}} \setminus B_{2^k}  $, there exists a piecewise smooth curve $\ell\subset \overline{B_{2^{k+1}}\setminus B_{2^k}}$ which connects $x^k$ and $x$ and has length less than $ 2^{k+3} \pi$.
		Therefore, using \eqref{estgradp}, one has
		\be \label{pressure1}
		|p (x)|=\left|\int_{\ell}\frac{\partial p}{\partial \tau}\, ds +p(x^k)\right|\leq \frac{C}{2^{2k}} \leq \frac{C}{|x|^2}.
		\ee
	This completes the proof of Lemma \ref{lemma21}.		
	\end{proof}

One can also prove an analogous statement in $\mathbb{R}^n_+\setminus \{0\}$ for the velocity field and the pressure.
\begin{lemma} \label{lemma22}
	Let $\Bu\in C^\infty (\Omega)$, $\Omega= \mathbb{R}^n_+\setminus \{0\}, n\geq 4,$  be a solution to \eqref{NS} in $\Omega$,  which satisfies \eqref{eq4} and \eqref{eq4_1} in $\Omega$.
	Then it holds that for some constant $C$
	\begin{align} \label{eq13_3}
		|\nabla \Bu(x)| + |p(x)| \leq \frac{C}{|x|^{2}} \quad \ \ \text{in }\Omega,
	\end{align}
	where the pressure $p$ is defined up to a constant.
\end{lemma}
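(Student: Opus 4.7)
The plan is to parallel the proof of Lemma~\ref{lemma21} via the scaling invariance of \eqref{NS}, using standard boundary regularity on the flat boundary of $\mathbb{R}^n_+$ to handle the Dirichlet condition. Given $x_0 \in \Omega$ with $R := |x_0|$, the rescaling $\Bv(y) := R\Bu(Ry)$, $q(y) := R^2 p(Ry)$ preserves \eqref{NS}, the half-space $\mathbb{R}^n_+ \setminus \{0\}$, the Dirichlet condition on $\partial \mathbb{R}^n_+ \setminus \{0\}$, and the pointwise bound $|\Bv(y)| \leq C/|y|$.

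First I would establish the gradient estimate. On the half-annulus $A := \{y \in \mathbb{R}^n_+ : 1/2 < |y| < 2\}$, the rescaled velocity $\Bv$ is uniformly bounded by $2C$ and vanishes on $\partial \mathbb{R}^n_+ \cap \overline{A}$. Classical boundary regularity for the steady Navier-Stokes system on a flat piece of boundary---a Caccioppoli-type estimate followed by a Stokes-regularity bootstrap up to the boundary, in the spirit of the argument for Lemma~\ref{lemma21} (cf.\ \cite{Galdi11,SverakTsai00})---yields $|\nabla \Bv(y)| \leq C'$ at every $y$ with $|y| = 1$, where $C'$ depends only on $C$ and $n$. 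Translating back gives $|\nabla \Bu(x)| \leq C'/|x|^2$ throughout $\Omega$.

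For the pressure, the same bootstrap, carried one step further, provides $|\nabla p(x)| \leq C/|x|^3$ in $\Omega$. To fix the additive constant in $p$, I would replicate the dyadic normalization from the proof of Lemma~\ref{lemma21}: let $\bar p_k$ denote the average of $p$ over the upper hemisphere $\partial B_{2^k} \cap \mathbb{R}^n_+$, use the pressure gradient bound to obtain $|\bar p_{k+1} - \bar p_k| \leq C\, 2^{-2k}$, subtract $\lim_{k \to \infty} \bar p_k$ from $p$ so that $|\bar p_k| \leq C\, 2^{-2k}$, and then integrate $\nabla p$ along a short piecewise smooth curve inside the connected set $\overline{B_{2^{k+1}} \setminus B_{2^k}} \cap \overline{\mathbb{R}^n_+}$ from a representative point on the hemisphere to an arbitrary $x$ in that half-annulus. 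This produces $|p(x)| \leq C/|x|^2$.

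The only mild obstacle is to verify that the constants appearing in the boundary regularity estimate depend solely on $\|\Bv\|_{L^\infty(A)}$ and not on any norm that would degenerate under the rescaling. Since the rescaled solution is smooth, uniformly bounded, and has vanishing Dirichlet trace on a smooth flat portion of the boundary, the bootstrap iteration is entirely standard, and the rest of the argument amounts to bookkeeping with the dyadic pressure normalization.
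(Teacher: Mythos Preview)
Your proposal is correct and follows essentially the same approach as the paper: scaling invariance reduces the gradient estimate to local Stokes regularity (interior and up to the flat boundary) on a fixed reference domain, and the pressure bound is then obtained from $|\nabla p|\leq C|x|^{-3}$ via the same dyadic hemisphere normalization. The only cosmetic difference is that the paper treats points near and away from $\partial\mathbb{R}^n_+$ in two separate cases with two different rescalings (centered at $x$ and at its projection $\bar x$, respectively), whereas you handle both at once by working on the fixed half-annulus $\{1/2<|y|<2\}\cap\mathbb{R}^n_+$ after a single dilation.
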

\begin{proof}
	Fix $x\in \Omega$, and consider the following two cases.
	
	{\it Case 1. $x_n>\frac{1}{15n}|x|$}. Let $R=\frac{|x|}{45n}$. Clearly, one has
	$Ry+x \in \mathbb{R}^n_+$ for all $y\in B_2$.
	Let
	$$\Bu_R(y) = R\Bu (Ry+x),	\quad 	 p_R(y)= R^2 p(Ry+x).$$
	As $\Bu$ satisfies \eqref{eq:sacinv},  one has
	\begin{align*}		|\Bu_R(y)| \leq \frac{CR}{|Ry+x|}
		\leq C\,\,\text{ for }y\in B_2,
	\end{align*}
	where	the constant $C$  depends neither on $R$ nor on $y$.
	Note that $\Bu_R$ and $p_R$ satisfy
	\begin{align*}
		-\Delta \Bu_R (y)
		+ \Bu_R\cdot \nabla \Bu_R (y)
		+ \nabla p _R (y)=0,
		\quad\ \ \  \div \Bu_R (y) =0	\,\ \ \mbox{in}\  B_2.
	\end{align*}
	By the regularity theory of the Stokes equations (cf. \cite[Theorem IV.4.4 and  Remark IV.4.2]{Galdi11}), one can get
	\begin{align} \label{eq_17_}
		\|\Bu_R\|_{W^{1,q}(B_1)}\ &\leq  C \left(  \|\Bu_R\|_{L^{2q} (B_2)}^2 + \|\Bu_R\|_{L^q (B_2)} \right)\leq C, \quad 1<q<\infty.
	\end{align}
	Here, the constant $C$ does not depend on $R$.
	Next, one can use the regularity theory again (cf. \cite[Theorem IV.4.1 and Remark IV.4.1]{Galdi11}) and \eqref{eq_17_} to obtain
	\begin{align*}
		\|\Bu_R\|_{W^{2,q}(B_{1/2})} \leq
		C \left( \|\Bu_R \cdot \nabla \Bu_R\|_{L^{q}(B_1)}
		+\|\Bu_R\|_{W^{1,q}(B_1)} \right)\leq C,
	\end{align*}
	and 
	\begin{equation}
		\|\Bu_R\|_{W^{3,q}(B_{1/4})} \leq
		C \left( \|\Bu_R \cdot \nabla \Bu_R\|_{W^{1, q}(B_{1/2} )}
		+\|\Bu_R\|_{W^{2,q}(B_{1/2} )} \right)\leq C.
	\end{equation}
	Hence by Sobolev embedding, it holds that
	$$	|\nabla \Bu_R(0)| + |\nabla^2 \Bu_R(0) |  \leq C\|\Bu_R\|_{W^{3,q}(B_{1/4})} \leq C,\  \ \ \ q>n.$$
	This implies
	\begin{align*}
		|\nabla \Bu(x)|\leq CR^{-2}\leq C|x|^{-2},\ \ \ \ \ |\nabla^2 \Bu(x)|\leq CR^{-3} \leq C |x|^{-3}. 
	\end{align*}
	
	{\it Case 2. $x_n\leq \frac{1}{15n}|x|$}. Let $\bar{x}=(x_1, \cdots, x_{n-1},0)$ and $\bar{R}=\frac{1}{5{n}}|\bar{x}|$. Clearly,  one has
	\begin{equation}\label{eqxxbar}
		\frac{4\sqrt{14}}{15}|x|\leq |\bar{x}|\leq |x|.
	\end{equation}
	Furthermore,   there is a $\hat{y}\in V_{\frac{2}{3}}$ such that $x=\bar{R} \hat{y}+\bar{x}$, where
	\[
	V_s :=\{(y_1, \cdots, y_{n-1}, y_n)|\,  y_n\in (0,s),\,  y_i\in (-s, s) \,\,\text{for}\,\, i=1,\cdots, n-1\}.
	\]
	Let
	$$\bar\Bu_{\bar{R}}(y) = \bar{R}\Bu (\bar{R}y+\bar{x}),	\quad 	 \bar{p}_{\bar{R}}(y)= {\bar{R}}^2 p(\bar{R}y+\bar{x}).$$
	As $\Bu$ satisfies \eqref{eq:sacinv}, one has
	\begin{align*}		|\bar\Bu_{\bar{R}}(y)| \leq \frac{C\bar{R}}{|\bar{R}y+\bar{x}|}
		\leq C\,\, \  \text{ for }y\in V_2(0),
	\end{align*}
	where	the constant $C$  depend neither on $\bar{R}$ nor on $y$.
	Note that $\bar\Bu_{\bar{R}}$ and $\bar{p}_{\bar{R}}$ satisfy
	\begin{align*}
		-\Delta \bar\Bu_{\bar{R}} (y)
		+ \bar{\Bu}_{\bar{R}}\cdot \nabla \bar\Bu_{\bar{R}} (y)
		+ \nabla \bar{p}_{\bar{R}} (y)=0,
		\quad \div \bar\Bu_{\bar{R}} (y) =0	\,\, \ \text{for}\,\, y\in V_2
	\end{align*}
	supplemented with the boundary conditions
	\begin{equation}\label{eq16}
		\bar{\Bu}_{\bar{R}}(y)=0\quad  \text{for} \,\, y\in  \partial V_2\cap \partial\mathbb{R}_+^n.
	\end{equation}
	By the regularity theory of the Stokes equations (cf. \cite[Theorem IV.5.3 and  Remark IV.5.2]{Galdi11}), one can get
	\begin{align}\label{eq_19}
		\|\bar\Bu_{\bar{R}}\|_{W^{1,q}(V_1)}\ &\leq  C \left(  \|\bar\Bu_{\bar{R}}\|_{L^{2q} (V_2)}^2 + \|\bar\Bu_{\bar{R}}\|_{L^q (V_2)} \right)\leq C, \quad 1<q<\infty,
	\end{align}
	where we have used the boundary condition \eqref{eq16}. Here, the constant $C$ does not depend on $R$.
	Next, one can use the regularity theory for Stokes equations up to the boundary again (cf. \cite[Theorem IV.5.1, Remark IV.5.1]{Galdi11}) and \eqref{eq_19} to obtain
	\begin{align*}
		\|\bar\Bu_{\bar{R}}\|_{W^{2,q}(V_{3/4})} \leq
		C \left( \|\bar\Bu_{\bar{R}} \cdot \nabla \bar\Bu_{\bar{R}}\|_{L^{q}(V_1)}
		+\|\bar\Bu_{\bar{R}}\|_{W^{1,q}(V_1)} \right)\leq C
	\end{align*}
 and 
 \begin{align*}
     \|\bar\Bu_{\bar{R}}\|_{W^{3,q}(V_{2/3})} \leq
		C \left( \|\bar\Bu_{\bar{R}} \cdot \nabla \bar\Bu_{\bar{R}}\|_{W^{1,q}(V_{3/4} )}
		+\|\bar\Bu_{\bar{R}}\|_{W^{2,q}(V_{3/4} )} \right)\leq C.
 \end{align*}
	Hence by Sobolev embedding, it also holds that
	$$	|\nabla \bar\Bu_{\bar{R}}(\hat{y})| + |\nabla^2 \bar{\Bu}_{\bar{R}} (\hat{y})|  \leq C\|\bar\Bu_{\bar{R}}\|_{W^{3,q}(V_{2/3})} \leq C\ \ \ \ \text{for}\,\, q>n.$$
	This, together with \eqref{eqxxbar}, implies
	\begin{align*}
		|\nabla \Bu(x)|\leq C|x|^{-2},\ \ \ \ \ \ |\nabla^2 \Bu(x)|\leq C|x|^{-3}.
	\end{align*}

The estimate for the pressure can be proved in the similar way as that in Lemma \ref{lemma21} when we replace the spheres $\partial B_{2^k}$ by the half spheres $(\partial B_{2^k})_+=\{x: |x|<2^k, x_n\geq 0\}$.
	Hence the proof of the lemma is completed.
\end{proof}


\section{Rigidity of steady solutions} \label{4d}
This section is devoted to the proof of Theorem \ref{main}. We first derive a weighted energy estimate for solutions in $\mathbb{R}^n\setminus \{0\}$ with $n\geq4$. Then   the case $n=4$ is investigated in Subsection \ref{subsec_4d}.  Next, the  case $n\geq 5$ is analyzed in Subsection \ref{subsec_5d}. The  rigidity of  solutions in $\mathbb{R}^n_{+}$ with Navier slip boundary condition is proved in Subsection \ref{sec_Navier}. At last, the proof of rigidity of solutions in the four-dimensional half space with Dirichlet boundary condition is presented in Subsection \ref{sec_half}, which is quite similar to the analysis for the solutions in $\mathbb{R}^4\setminus \{0\}$.

Let $\Bu$ be a solution to \eqref{NS} in $\mathbb{R}^n\setminus \{0\}, n\geq 4$, which satisfies \eqref{eq:sacinv}.
Then by Lemma \ref{lemma21}, the total head pressure given by $H=\frac{|\Bu|^2}{2}+p$ satisfies
\begin{align} \label{eq18}
	|\nabla^k H(x)| \leq \frac{C_k}{|x|^{2+k}}\quad \text{in }\mathbb{R}^n \setminus \{0\}, \quad k=0,1,2,\ldots
\end{align}

Note that
\begin{align}
	\Delta \Bu \cdot \Bu= \Bu \cdot \nabla \Bu \cdot \Bu  +\nabla p \cdot \Bu
	=
	\Bu \cdot \nabla \left( \frac{|\Bu|^2}{2} \right)
	+ \Bu \cdot \nabla p
	=\Bu \cdot \nabla H.
\end{align}
Multiplying the above equation  by $r^{4-n}$ (here, $r=|x|$), and integrating it over the annulus $B_{R_2}\setminus B_{R_1}$ centered at the origin  yield
\begin{equation}
	\begin{aligned} \label{eq13_2}
		0=&  -\int_{B_{R_2}\setminus B_{R_1}}
		\Delta \Bu \cdot \Bu \thinspace r^{4-n} dx
		+ \int_{B_{R_2}\setminus B_{R_1}}
		(\Bu \cdot \nabla) H \thinspace r^{4-n} dx  \\
		=&\int_{B_{R_2}\setminus B_{R_1}}
		(|\nabla \Bu|^2 r^{4-n}+(n-4)|\Bu|^2\thinspace r^{2-n})dx\\
		&~
		+
		\left(
		\int_{ \partial B_r}
		-\partial_r \Bu \cdot \Bu \thinspace  r^{4-n}
		-\frac{n-4}{2} |\Bu|^2 r^{3-n}
		d\sigma\right)
		\Bigg|_{r=R_1}^{r=R_2} \\
		&~+ (n-4)\int_{B_{R_2}\setminus B_{R_1}}
		H \thinspace u^r \thinspace r^{3-n} dx
		+
		\left(
		\int_{ \partial B_r} u^r H r^{4-n}  d\sigma \right)\Bigg|_{r=R_1}^{r=R_2},
	\end{aligned}
\end{equation}
where $u^r=\Bu\cdot \Be_r=\Bu\cdot\frac{x}{|x|}$.
Alternatively, we can write
\begin{align} \label{eq17_2}
    \int_{B_{R_2}\setminus B_{R_1}}
	(|\nabla \Bu|^2 r^4 + (n-4)|\Bu|^2 r^2 + (n-4) H u^r r^3 )\thinspace r^{-n} dx
	=
	h_1(R_2)-h_1(R_1),
\end{align}
where
\begin{align} \label{eq18_2}
	h_1(R)= \int_{\partial B_R}
	\left(\partial_r \Bu \cdot \Bu \thinspace R^{4-n}
	+ \frac{n-4}{2} |\Bu|^2 R^{3-n} - u^r \thinspace H \thinspace R^{4-n} \right) d\sigma.
\end{align}
When $n=4$, Equation \eqref{eq17_2} has a much simpler form   whereas for $n\geq 5$, it looks a little more complicated. Hence we carry on the analysis for these two cases separately.

\subsection{\textit{Case $n=4$}} \label{subsec_4d}
In this case, Equations \eqref{eq17_2} and \eqref{eq18_2} reduce to
\begin{align} \label{eq19_3}
	\int_{B_{R_2}\setminus B_{R_1}} |\nabla \Bu|^2 dx
	=h_1(R_2)-h_1(R_1),
	~ \textrm{where}~
	h_1(R)=
	\int_{\partial B_R}
	(\partial_r \Bu \cdot \Bu
	- u^r \thinspace H ) \thinspace  d\sigma.
\end{align}
Hence $h_1(R)$ is increasing with respect to $R$. Moreover, it follows from Equation \eqref{eq18} and Lemma \ref{lemma21}  that  $h_1 \in L^\infty (0,\infty)$. Therefore, \eqref{eq19_3} implies $\nabla \Bu\in L^2 (\mathbb{R}^4)$.
Hence, as $\lim_{|x|\to\infty}\Bu(x)=0$, one can obtain, by the Sobolev inequality (cf. Theorem II.6.1 of \cite{Galdi11}),
\begin{align*}
	\int_0^\infty
	\left(
	\int_{\partial B_r} |\Bu|^4 d\sigma
	\right) dr = \int_{\mathbb{R}^4} |\Bu|^4 dx <\infty.
\end{align*}
Thus it follows from  Lemma \ref{lem:fasterdecay} that
\begin{align}\label{eq28}
	\liminf_{R\to 0} \left( R \int_{\partial B_R} |\Bu|^4 d\sigma \right)=\liminf_{R\to \infty} \left( R \int_{\partial B_R} |\Bu|^4 d\sigma \right) =0.
\end{align}
Now an  application of the H\"older inequality together with Lemma \ref{lemma21} and  \eqref{eq18}  yields that
\begin{equation}\label{eq:Holder1}
	\begin{aligned}
		|h_1(R)|&\leq      \left( \int_{\partial B_R} |\Bu|^4 d\sigma \right)^{\frac{1}{4}}\left( \int_{\partial B_R} |\partial_r\Bu|^{\frac{4}{3}}d\sigma \right)^{\frac{3}{4}}
		+ \left( \int_{\partial B_R} |u^r|^4 d\sigma \right)^{\frac{1}{4}}\left( \int_{\partial B_R} |H|^{\frac{4}{3}}d\sigma \right)^{\frac{3}{4}}\ \\
		&\leq C R^{\frac{1}{4}} \left( \int_{\partial B_R} |\Bu|^4 d\sigma \right)^{\frac{1}{4}}.
	\end{aligned}
\end{equation}
Using \eqref{eq28} and \eqref{eq:Holder1} and noting that  $h_1$ is increasing, we  have
\begin{equation}
	\lim_{R\to 0} h_1(R)=\liminf _{R\to 0}h_1(R)=0 \textrm{ and } \lim_{R\to \infty} h_1(R)=\liminf _{R\to \infty}h_1(R)=0.
\end{equation}
Taking the limits as $R_2\to\infty$ and $R_1\to 0$ in \eqref{eq19_3}, one can obtain $\|\nabla \Bu\|_{L^2 (\mathbb{R}^4)}=0 $ and thus $\Bu=constant$, which further implies $\Bu=0$ due to $|\Bu(x)|\leq\frac{C}{|x|}$. This finishes the proof for the first part of Theorem \ref{main} in the case $n=4$.

\subsection{\textit{Case $n\geq 5$}} \label{subsec_5d}
We prove the first part of Theorem \ref{main} in the case $n\geq 5$.
In this case, we cannot directly apply the same idea to \eqref{eq17_2} as for the case $n=4$ mainly due to the presence of the term involving $H\thinspace u^r$ on the left-hand side of \eqref{eq17_2}. To handle this term, we first prove some weighted estimates of $H$ via  energy estimates.

We begin with the following equation of the total head pressure
\begin{align} \label{heq2}
	-\Delta H+ \Bu\cdot \nabla H =-2 |\boldsymbol{\omega}|^2  \quad \text{in }\mathbb{R}^n\setminus \{0\},
\end{align}
where $\boldsymbol{\omega}$ is the anti-symmetric part of $\nabla \Bu$.
Multiplying   Equation \eqref{heq2} by $H_+^\alpha$, $\alpha=(n-4)/2,$ and integrating
over the annulus $B_{R_2} \setminus B_{R_1}$ centered at the origin yield
\begin{equation} \label{eqalpha}	
	\begin{aligned}
		-2 \int  _{B_{R_2} \setminus B_{R_1}}  |\boldsymbol{\omega}|^2 H_+^\alpha \thickspace dx
		&=
		-\int _{B_{R_2} \setminus B_{R_1}} \Delta H H_+^\alpha \thickspace dx + \int  _{B_{R_2} \setminus B_{R_1}}  \Bu \cdot \nabla H H^{\alpha}_+ \thickspace dx \\
		&= \alpha \int _{B_{R_2}\setminus B_{R_1}}  |\nabla  H_+|^2 H^{\alpha-1}_+ \thickspace dx
		+h_2(R_2)-h_2(R_1)
	\end{aligned}
\end{equation}
where
\begin{align*}
	h_2(R)= - \int_{\partial B_R}
	\left(
	\partial_r H_+ H_+^\alpha - \frac{1}{\alpha +1} u^r H_+^{\alpha+1}
	\right)
	d\sigma.
\end{align*}
Therefore, one can get
\begin{align} \label{eq53}
	h_2(R_2)-h_2(R_1)
	=
	-\alpha \int _{B_{R_2} \setminus B_{R_1}}  |\nabla  H_+|^2 H^{\alpha-1}_+ \thickspace dx
	-2 \int  _{B_{R_2} \setminus B_{R_1}}  |\boldsymbol{\omega}|^2 H_+^{\alpha} \thickspace dx\leq 0.
\end{align}
This implies that $h_2(R)$ is a decreasing function on $(0,\infty)$.
On the other hand, $h_2\in L^\infty (0,\infty)$ due to \eqref{eq18}.
Hence, both  the limits,
$\lim_{R\to 0}h_2(R)$ and $\lim_{R \to \infty} h_2(R)$ exist and are finite.
Then taking the limits as $R_2\to\infty$ and  $R_1\to0$ in \eqref{eq53} simultaneously, we obtain
\begin{equation}
	\begin{aligned} \label{eq54}
		\alpha \int _{ \mathbb{R}^n }  |\nabla  H_+|^2 H^{\alpha-1}_+ \thickspace dx
		+2 \int  _{ \mathbb{R}^n }  |\boldsymbol{\omega}|^2 H_+^{\alpha} \thickspace dx
		=\lim_{R\to0 } h_2(R)
		-\lim_{R\to\infty} h_2(R)
		<\infty.
	\end{aligned}
\end{equation}
Note $\alpha=(n-4)/2>0$.
Hence $\nabla (H_+^{\frac{\alpha+1}{2}}) \in L^2 (\mathbb{R}^n)$. 
Therefore, as $\lim_{|x|\to\infty}H_+^{\frac{\alpha+1}{2}}(x)=0$, one can obtain $H_+^{\frac{\alpha+1}{2}} \in L^{\frac{2n}{n-2} }(\mathbb{R}^n)$ by the Sobolev inequality (cf. Theorem II.6.1 of \cite{Galdi11}), which means
\begin{align*}
	\int_{0}^\infty
	\int_{ \partial B_r} H_+^{\frac{n}{2}} d\sigma
	dr
	= \int_{\mathbb{R}^n} H_+^{\frac{n}{2}} dx<\infty.
\end{align*}
Thus it follows from Lemma \ref{lem:fasterdecay} that
\begin{align}\label{eq:decayofH}
	\liminf_{R\to 0}\left(
	R \int_{\partial B_R }
	H_+^{\frac{n}{2}} d\sigma\right) =\liminf_{R\to \infty}
	\left(R \int_{\partial B_R }
	H_+^{\frac{n}{2}} d\sigma \right)
	=0.
\end{align}
The H\"older inequality   together with Lemma \ref{lemma21} and \eqref{eq18} yields
\begin{equation}
	\begin{aligned}
		|h_2(R)| &\leq \left(\int_{\partial_{B_R}}(H_+^\alpha)^{\frac{n}{n-4}}\right)^{\frac{n-4}{n} }
		\left(\int_{\partial_{B_R}}|\partial_r H_+|^{\frac{n}{4}}\right)^{\frac{4}{n}}
		+ \left(\int_{\partial_{B_R}}(H_+^\alpha)^{\frac{n}{n-4}}\right)^{\frac{n-4}{n} }
		\left(\int_{\partial_{B_R}}|u_r H_+|^{\frac{n}{4}}\right)^{\frac{4}{n}}\\
		& \leq CR^{1-\frac{4}{n}}\left(\int_{\partial B_{R}}H_+^{\frac{n}{2}} d\sigma \right)^{1-\frac{4}{n}}.
	\end{aligned}
\end{equation}
Hence, using the limits in \eqref{eq:decayofH} and noting that $h_2(R)$ is a decreasing function, it follows that
\begin{equation*}
	\lim_{R\to0 }h_2(R)=\lim_{R\to\infty}h_2(R)=0.
\end{equation*}
Therefore, from \eqref{eq54}, it holds that $H_+\equiv 0$, that is
$H\leq 0 \text{ in }\mathbb{R}^n \setminus \{0\}$.

Now  multiplying the momentum equation  of \eqref{NS} by $\nabla {r^{4-n}}$, integrating it over the annulus $B_{R_2}\setminus B_{R_1}$ for $ R_2>R_1$ give
\begin{align}\label{eq:momeq}
	\int_{B_{R_2}\setminus B_{R_1}}
	(-\Delta \Bu + \Bu \cdot \nabla \Bu + \nabla p)
	\cdot
	\nabla {r^{4-n}} dx=0.
\end{align}
For the first term above, we claim that
\begin{align}
	\label{eq+28}
	-\int_{B_{R_2}\setminus B_{R_1}}
	\Delta \Bu \cdot \nabla  r^{4-n}  dx
	=0.
\end{align}
To see this, we apply the integration by parts along with $\div (\Delta \Bu )=0$ to obtain
\begin{align} \label{eq:divfree}
	-\int_{B_{R_2}\setminus B_{R_1}}
	\Delta \Bu \cdot \ \nabla {r^{4-n}}  dx = -\int_{B_{R_2}\setminus B_{R_1}}
	\div(\Delta \Bu   {r^{4-n}} ) dx
	=-\left(
	{r^{4-n}}\int _{\partial B_r}
	\Delta \Bu \cdot \Be_r \thinspace d\sigma
	\right) \Bigg|_{r=R_1}^{r=R_2}.
\end{align}
By using the divergence theorem with $\div (\Delta \Bu)=0$ and Lemma \ref{lemma21}, one can get that for any fixed $r>0$
\begin{align*}
	\left|
	\int_{\partial B_r}
	\Delta \Bu \cdot \Be_r
	\thinspace d\sigma
	\right|
	=
	\left|
	\int _{\partial B_\varepsilon} \Delta \Bu \cdot \Be_r d\sigma
	\right|
	\leq C \varepsilon^{n-4} \to 0 \quad \text{as }\varepsilon \to 0,
\end{align*}
where $n\geq 5$ has been used.
This, together with \eqref{eq:divfree} gives  \eqref{eq+28}.

For the second and third terms on the left-hand side of \eqref{eq:momeq}, integration by parts yields
\begin{align} \label{eq27}
	\begin{aligned}
		&\int_{B_{R_2}\setminus B_{R_1}}
		\left(
		\Bu \cdot \nabla \Bu
		+ \nabla p
		\right)
		\cdot  \nabla {r^{4-n}}
		dx\\
		=& (4-n)\left\{
		\int_{B_{R_2}\setminus B_{R_1}}
		\left(
		(n-2)|u^r|^2 - |\Bu|^2
		-2p
		\right) r^{2-n } \thinspace dx
		+
		\left(
		\int_{ \partial B_r }
		(|u^r|^2 +p) r^{3-n}
		d\sigma\right) \Bigg|_{r=R_1}^{r=R_2} \right\}.
	\end{aligned}
\end{align}
Combining \eqref{eq+28} and \eqref{eq27}, and due to $H\leq 0$ in $\mathbb{R}^n\setminus\{0\}$, one has
\begin{align} \label{eq31}
	h_3(R_2)- h_3(R_1)=
	\int_{B_{R_2}\setminus B_{R_1}}
	\frac{1}{r^{n-2}}
	\left(
	(n-2)|u^r|^2
	-2H
	\right) dx
	\geq 0,
\end{align}
where
\begin{align}
	\label{eq27_2}
	h_3(R)
	= -\int_{\partial B_R}
	\left(
	|u^r|^2
	+p
	\right) R^{3-n} d\sigma.
\end{align}
Again $h_3$ is an increasing function, and $h_3\in L^\infty (0, \infty)$ thanks to Lemma \ref{lemma21}. Hence the limits,
$\lim_{R\to\infty} h_3(R)$ and $\lim_{R\to 0} h_3(R)$ exist and are finite. Sending  $R_2\to\infty$ and $R_1 \to 0$ in \eqref{eq31}, one can obtain
\begin{align} \label{eq35_2}
	\int_{\mathbb{R}^n}
	\left(
	(n-2)\frac{|u^r|^2}{r^{n-2}}
	-\frac{2 H}{r^{n-2}}
	\right) dx
	=
	\lim_{R\to\infty} h_3(R)
	-\lim_{R\to 0}h_3(R)
	<\infty.
\end{align}
Since  $H\leq 0$ in $\mathbb{R}^n \setminus \{0\}$, one can get
\begin{align} \label{eq66}
	\int_{\mathbb{R}^{n}}
	\frac{|H|}{r^{n-2}} dx
	<\infty.
\end{align}


Now it follows from \eqref{eq66} and \eqref{eq:sacinv} that one has $H u^r r^{3-n} \in L^1 (\mathbb{R}^n)$. With this integrability, we can argue similarly to  the case $n=4$. For $h_1$ defined in \eqref{eq18_2},
Lemma \ref{lemma21} and \eqref{eq18} shows that $h_1\in L^\infty (0,\infty)$. Hence both  $\liminf_{R\to 0} h_1(R)$ and $\liminf_{R\to \infty} h_1(R)$  are finite. First  taking the lower limit as $R_2\to\infty$ on both sides of \eqref{eq17_2} then taking the upper limit on both sides of \eqref{eq17_2} as $R_1\to 0$, and using $H u^r r^{3-n} \in L^1 (\mathbb{R}^n)$, one obtains
\begin{equation}\label{equgradu}
	\int_{\mathbb{R}^n}|\Bu|^2 r^{2-n}+|\nabla \Bu|^2r^{4-n}<\infty.
\end{equation}
This, together with \eqref{eq66}, yields
\begin{equation}
	\int_{\mathbb{R}^n} |p|r^{2-n}dx \leq \int_{\mathbb{R}^n}\left|H-\frac{|\Bu|^2}{2}\right| r^{2-n}dx <\infty.
\end{equation}
Hence one has
\begin{equation*}
	\int_{0}^{\infty}\frac{|h_3(R)|}{R}dR\leq \int_{0}^{\infty}\int_{\partial{B_R}}\left||u^r|^2+p\right|R^{2-n}d\sigma dR = \int_{\mathbb{R}^n}\left||u^r|^2+p\right|r^{2-n} dx <+\infty.
\end{equation*}
It then follows from Lemma \ref{lem:fasterdecay} that
\begin{equation*}
	\liminf_{R\to 0} |h_3(R)|
	=\liminf_{R\to \infty} |h_3(R)|=0.
\end{equation*}
Note that $h_3$ is an increasing function. Therefore, it holds that
\begin{equation*}
	\lim_{R\to 0} h_3(R)
	=\lim_{R\to \infty} h_3(R)=0.
\end{equation*}
Finally sending $R_1\to 0$ and $R_2\to \infty$
in \eqref{eq31} and using $H\leq 0$, we deduce that $u^r= H\equiv0$.

It follows from Lemma \ref{lem:fasterdecay} and \eqref{equgradu} that  we have
\begin{align}\label{eq:decayofu}
	\liminf_{R\to 0}
	\left(
	R^{3-n} \int_{\partial B_R} |\Bu|^2 d\sigma
	\right)=\liminf_{R\to \infty}
	\left(
	R^{3-n} \int_{\partial B_R} |\Bu|^2 d\sigma
	\right)=0.
\end{align}
This,  together with the property $H\equiv 0$,  \eqref{eq18_2}, the H\"older inequality, and Lemma \ref{lemma21}  gives     $$\liminf_{R\to 0} |h_1|(R)=\liminf_{R\to \infty} |h_1|(R)=0.$$
Furthermore, we immediately know from \eqref{eq17_2} and the property $H \equiv 0$ that $h_1$ is an increasing function. Hence one has
$$\lim_{R\to 0} |h_1|(R)=\lim_{R\to \infty} |h_1|(R)=0.$$
Therefore, taking the limit as $R_2\to \infty$ and $R_1\to 0$ on both sides of  \eqref{eq17_2}, one can directly get $\Bu\equiv 0$.
This finishes the proof  for the first part of Theorem \ref{main} in the case $n\geq 5$.
\subsection{Rigidity of solutions in $\mathbb{R}^n_{+}$ with Navier slip boundary condition} \label{sec_Navier}
This subsection is devoted to the proof for the second part of Theorem \ref{main}. The main idea is extending the soltuion $\Bu$ to the whole space $\mathbb{R}^n$. Note that the boundary condition \eqref{eq4_0} can be written as 
\begin{equation}\nonumber
u_n = 0, \ \ \ \ \partial_n u_i = 0, \ i=1, \cdots, n-1. 
\end{equation}
Let 
\begin{equation*}
\tilde{\Bu}(x^\prime, x_n)= \left\{ \begin{array}{l} (u_1 (x^\prime, x_n),  \cdots, u_{n-1}(x^\prime, x_n), \, u_n(x^\prime, x_n)), \ \ \ \ \  (x^\prime, x_n) \in \mathbb{R}^n_{+}, \\
(u_1 (x^\prime, -x_n),  \cdots, u_{n-1}(x^\prime, -x_n), \, -u_n(x^\prime, -x_n)), \ \ \ (x^\prime, x_n) \in \mathbb{R}^n_{-}, \end{array}
\right. 
\end{equation*}
and
\begin{equation*}
\tilde{p}(x^\prime, x_n)= \left\{ \begin{array}{l} p(x^\prime, x_n), \ \ \ \ \  (x^\prime, x_n) \in \mathbb{R}^n_{+}, \\
p(x^\prime, - x_n), \ \ \ (x^\prime, x_n) \in \mathbb{R}^n_{-}. \end{array}
\right. 
\end{equation*}
One can check that $(\tilde{\Bu}, \tilde{p})$ is the solution to the Navier-Stokes equations in $\mathbb{R}^n \setminus \{0\}$, and satisfies 
 \begin{equation}\label{estutilde}
     |\tilde{\Bu}(x)| \leq \frac{C}{|x|}, \ \ \ x\in \mathbb{R}^n \setminus \{0\}.
 \end{equation}
 With the aid of \eqref{estutilde}, we can immediately show that  $(\tilde{\Bu}, \tilde{p})$ is a smooth solution of \eqref{NS} in $\mathbb{R}^n \setminus \{0\}$.
 According to the first part of Theorem \ref{main}, $\tilde{\Bu} \equiv 0$ and the proof  for the second part of Theorem \ref{main} is completed.

 \subsection{Rigidity of solutions in four-dimensional half space} \label{sec_half}
This subsection is devoted to the proof for the third part of Theorem \ref{main}.

Multiplying the momentum equation in \eqref{NS} by $\Bu$ and integrating over $B_{R_2,+}\setminus B_{R_1,+},R_2>R_1>0$ where $B_{R,+}=\{x:|x|<R, x_n>0\}$ denotes the half ball, one obtains
\begin{align} \label{eq49}
	\int_{B_{R_2,+}\setminus B_{R_1,+}}
	|\nabla \Bu|^2 dx
	= h(R_2)-h(R_1),
\end{align}
where we have used the boundary condition and
\begin{align*}
	h(R)
	=-
	\int_{(\partial B_{R})_+}
	\left(
	\frac{\partial \Bu}{\partial r} \cdot \Bu - \frac{1}{2}|\Bu|^2 u^r - p u^r \right) \thinspace d\sigma,
	\textrm{ with }  (\partial B_R)_+ = \partial B_R \cap \mathbb{R}^4_+.
\end{align*}
Hence $h(R)$ is an increasing function.

It follows from \eqref{eq4} and Lemma \ref{lemma22} that one has  $h\in L^\infty (0,\infty)$.
Consequently, both of the limits $\lim_{R\to 0}h(R)$ and $\lim_{R\to\infty}h(R)$ exist and are finite. Letting $R_1\to 0, R_2\to\infty$ in \eqref{eq49} yields that
$\nabla \Bu \in L^2 (\mathbb{R}^4_+)$.

Moreover, it follows from  \eqref{eq4_1} and the Sobolev embedding theorem that
$\Bu\in L^4 (\mathbb{R}^4_+)$.
Thus the following limits hold:
\begin{align} \label{eq37_2}
	\liminf_{R\to 0}
	\left( R
	\int_{ (\partial B_R)_+ } |\Bu|^4 d\sigma \right)=\liminf_{R\to \infty}
	\left( R
	\int_{ (\partial B_R)_+ } |\Bu|^4 d\sigma \right)=0.
\end{align}
Then one can get $$\lim_{R\to 0}h(R)=\lim_{R\to\infty}h(R)=0$$ by using \eqref{eq4} and \eqref{eq37_2} with the help of the H\"older inequality similar to \eqref{eq:Holder1}.
Hence, sending $R_1\to 0$ and $ R_2\to \infty$ in  \eqref{eq49}, one can get
$ \int_{\mathbb{R}^4_+}        |\nabla \Bu|^2 dx=0$,
which
implies $\Bu \equiv 0$. This finishes the proof for the third part of Theorem \ref{main}.

\begin{remark}
    Note that we don't have detailed information for the pressure on the boundary $\partial \mathbb{R}^n_+$, so it does not seem easy to adapt the methods developed in Subsection \ref{subsec_5d} directly to prove the rigidity of solutions in half space with dimension $n\geq 5$.
\end{remark}

\section{Applications of Rigidity for steady solutions}\label{sec:appl}
In this section, we use Theorem \ref{main} to prove Theorems \ref{cor:regularity} and \ref{thm:asy}.

\subsection{Removable singularity for steady solutions}
In this subsection, as an application of Theorem \ref{main}, we prove Theorem  \ref{cor:regularity}.
\begin{proof}[Proof of Theorem \ref{cor:regularity}]
	We only need to show the regularity of solutions at the origin $0$. It follows from \cite{KimKozono06} that if $\Bu$ is not regular at the origin, then  there exist a constant $\epsilon>0$ and a sequence of points  $\{x^k\}$, such that $x^k \to 0$ and
	\begin{equation}\label{eq:sequence}
		|\Bu(x^k)|\geq \frac{\epsilon}{|x^k|}.
	\end{equation}
	Let $\lambda_k=|x^k|$ and
	\begin{equation*}
		\Bu_k(x)= \lambda_k \Bu(\lambda_k x),~~\ x\in B_{\frac{1}{\lambda_k}}(0).
	\end{equation*}
	According to \eqref{corollary1-condition}, we have
	\begin{equation}\label{eq47}
		|\Bu_k(x)|\leq \frac{C}{|x|}.
	\end{equation}
	Then by the regularity theory for the Navier-Stokes equations,
	for every fixed compact subdomain $\Omega$ in $\mathbb{R}^n \setminus \{0\}$,
	$\Bu_k, \nabla \Bu_k, \nabla^2 \Bu_k,  \nabla^3 \Bu_k$ are uniformly bounded in $\Omega$, for $k$ large enough.
	It then follows that after passing to  a subsequence, $\Bu_k\to \Bar{\Bu}$  in $C^2_{loc}(\mathbb{R}^n \setminus \{0\})$ for some vector field $\Bar{\Bu}$.
	Hence $\bar{\Bu}$ is a solution to \eqref{NS} on $\mathbb{R}^n\setminus\{0\}$.
	
	Moreover, it follows from \eqref{eq47} that $|\bar{\Bu}(x)|\leq \frac{C}{|x|}$.
	Now, we can apply Theorem \ref{main} to conclude that $\bar{\Bu}\equiv 0$. On the other hand, the estimate \eqref{eq:sequence} implies that for any $k$,
	\begin{equation*}
		\left|\Bu_k\left(\frac{x^k}{\lambda_k}\right)\right|\geq \epsilon.
	\end{equation*}
	As $\{\lambda_k^{-1} x^k\}$ is a sequence of points on the unit sphere,    after passing to a subsequence and using the uniform convergence of $\Bu_k$, we have for some $x^*\in \partial B_1$ that $|\bar{\Bu}(x^*)|\geq \epsilon$. This leads to a contradiction and thus $\Bu$ must be regular at the origin.
\end{proof}

\subsection{Asymptotic behavior for steady solutions in exterior domains}\label{sec:asy}
This subsection is devoted to the proof of Theorem \ref{thm:asy}.
To prove Theorem \ref{thm:asy}, it suffices to prove the following two lemmas.

\begin{lemma}\label{corollary-2-1}
	Assume that $\Bu$ is a smooth solution to the Navier-Stokes equations \eqref{NS} in $\mathbb{R}^n \setminus B_1 $ and satisfies
	\be \label{3-1-1}
	|\Bu(x)|\leq \frac{C}{|x|}, \ \  x\in \mathbb{R}^n\setminus B_1.
	\ee
	Then  we have
	\be \label{3-2-1.5}
	|\Bu(x)| =o\left(\frac{1}{|x|}\right)\  \textrm{ as } x\to \infty.
	\ee
\end{lemma}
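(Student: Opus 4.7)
The plan is to argue by contradiction using a rescaling/blow-up argument that mirrors the proof of Theorem \ref{cor:regularity}, but in the opposite regime (at infinity rather than at the origin). Suppose that $|\Bu(x)| = o(1/|x|)$ fails as $x \to \infty$. Then there exist $\epsilon > 0$ and a sequence $\{x^k\} \subset \mathbb{R}^n \setminus B_1$ with $|x^k| \to \infty$ such that
\begin{equation*}
    |\Bu(x^k)| \geq \frac{\epsilon}{|x^k|}.
\end{equation*}

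Set $\lambda_k = |x^k|$ and define the rescaled solutions $\Bu_k(x) = \lambda_k \Bu(\lambda_k x)$. By the scaling invariance of the Navier-Stokes system, $\Bu_k$ (together with $p_k(x) = \lambda_k^2 p(\lambda_k x)$) solves \eqref{NS} on the domain $\{x : \lambda_k x \in \mathbb{R}^n \setminus B_1\} = \mathbb{R}^n \setminus B_{1/\lambda_k}$. The assumption \eqref{3-1-1} immediately gives the uniform bound $|\Bu_k(x)| \leq C/|x|$ on this domain. Because $\lambda_k \to \infty$, these domains exhaust $\mathbb{R}^n \setminus \{0\}$.

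Next I would use interior regularity for the Navier-Stokes equations (as in Lemma \ref{lemma21} or the Stokes regularity bootstrap used in Lemma \ref{lemma22}) to obtain uniform $C^3_{\mathrm{loc}}$ bounds on $\Bu_k$ over every compact subset of $\mathbb{R}^n \setminus \{0\}$, for $k$ sufficiently large. By Arzel\`a-Ascoli and a diagonal extraction, a subsequence converges in $C^2_{\mathrm{loc}}(\mathbb{R}^n \setminus \{0\})$ to a limit $\bar{\Bu}$ which is a smooth solution of \eqref{NS} on $\mathbb{R}^n \setminus \{0\}$ still satisfying $|\bar{\Bu}(x)| \leq C/|x|$, i.e., $\bar{\Bu} \in \mcL^\infty_r$. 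Theorem \ref{main}(i) then forces $\bar{\Bu} \equiv 0$.

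To reach a contradiction, observe that the rescaled points $y^k := x^k / \lambda_k$ lie on $\partial B_1$, and by the choice of $x^k$,
\begin{equation*}
    |\Bu_k(y^k)| = \lambda_k |\Bu(x^k)| \geq \epsilon.
\end{equation*}
After a further subsequence, $y^k \to y^\ast \in \partial B_1$, and the $C^0_{\mathrm{loc}}$ convergence on the compact set $\partial B_1$ yields $|\bar{\Bu}(y^\ast)| \geq \epsilon$, contradicting $\bar{\Bu} \equiv 0$. The only delicate step is justifying the uniform higher-derivative bounds that allow the compactness extraction, but since the $\Bu_k$ satisfy a scaling-invariant pointwise bound and solve \eqref{NS} on arbitrarily large subsets of $\mathbb{R}^n \setminus \{0\}$, this is standard and I expect no genuine obstacle there; Theorem \ref{main} is doing all of the real work.
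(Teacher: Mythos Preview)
Your proof is correct and follows essentially the same approach as the paper: a contradiction argument via rescaling $\Bu_k(x)=\lambda_k\Bu(\lambda_k x)$ with $\lambda_k=|x^k|$, uniform local regularity estimates, compactness to extract a limit $\bar{\Bu}\in\mcL^\infty_r$ on $\mathbb{R}^n\setminus\{0\}$, and an application of Theorem~\ref{main} to force $\bar{\Bu}\equiv 0$, contradicted by $|\Bu_k(x^k/\lambda_k)|\geq\epsilon$ on $\partial B_1$. The only cosmetic difference is that you spell out the Arzel\`a--Ascoli/diagonal extraction and the scaled pressure, while the paper just invokes ``regularity theory''.
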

\begin{proof}
	We prove \eqref{3-2-1.5} by a contraction argument similar to Theorem \ref{cor:regularity}.
	If  \eqref{3-2-1.5} does not hold, there exist an $\epsilon>0$ and a sequence of points $\{x^k\}$ with $|x^k|\geq k$ such that $\dis |\Bu(x^k)| \geq \frac{\epsilon}{|x^k|}$.
	Let
	\be \nonumber
	\lambda_k = |x^k|,\ \ \ \Bu_k(x) = \lambda_k \Bu( \lambda_k x) \ \ \ \text{for}\,\,  x\in \mathbb{R}^n\setminus B_{\lambda_k^{-1}}.
	\ee
	Then $\Bu_k$ is a solution to the Navier-Stokes equations in $\mathbb{R}^n\setminus B_{\lambda_k^{-1}} $ and satisfies $\dis |\Bu_k(x)| \leq \frac{C}{|x|}$.
	Moreover, it follows from the regularity theorem for the Navier-Stokes system that for every fixed compact subdomain $\Omega$ in $\mathbb{R}^n \setminus \{0\}$,
	$\Bu_k, \nabla \Bu_k, \nabla^2 \Bu_k,  \nabla^3 \Bu_k$ are uniformly bounded in $\Omega$ for $k$ large enough. Hence, there is a subsequence of $\{\Bu_k\}$ (still labelled by $\{\Bu_k\}$) which converges to some $\bar{\Bu}$ in  $C^2_{loc}(\mathbb{R}^n \setminus \{0\})$. This implies that $\bar{\Bu}$ is a solution to the Navier-Stokes equations in $\mathbb{R}^n\setminus \{0\}$ and satisfies $\dis |\bar{\Bu}(x)| \leq \frac{C}{|x|}$. According to Theorem \ref{main}, one has $\bar{\Bu} \equiv 0$.
	
	However, as assumed,
	\be\nonumber
	|\Bu_k (\lambda_k^{-1} x^k) | = |\lambda_k \Bu (x^k)| \geq \frac{\epsilon \lambda_k}{|x^k|} =\epsilon.
	\ee
	As $\{\lambda_k^{-1} x^k\}$ is a sequence of points on the unit sphere,  there is a subsequence of $\{\lambda_k^{-1} x^k\}$ (still labeled by $\{\lambda_k^{-1} x^k\}$) which converges to a point $x^* $ on the unit sphere so that
	\be \nonumber
	|\bar{\Bu}(x^*) | = \lim_{k \rightarrow \infty}| \Bu_k(\lambda_k^{-1} x^k )| \geq \epsilon.
	\ee
	This leads to a contradiction and thus the lemma is proved.
\end{proof}

\begin{lemma}\label{epsilon-R}
	Assume that $\Bu$ is a smooth solution to the Navier-Stokes equations \eqref{NS} in $\mathbb{R}^n \setminus B_R$. There exists an absolute constant $\epsilon<1$, such that if $\Bu$ satisfies
	\begin{equation}\label{2-1}
		|\Bu(x)| \leq \frac{\epsilon}{|x|}\ \ \ \text{for}\,\, x\in \mathbb{R}^n\setminus B_R,
	\end{equation}
	then it holds that
	\begin{equation} \label{2-2}
		|\Bu(x)| \leq \frac{CR^{n-3} }{|x|^{n-2}} \ \ \ \text{for}\,\,x\in \mathbb{R}^n\setminus B_R.
	\end{equation}
	where the constant $C=C(\epsilon)$ is independent of $R$ and $\Bu$.
\end{lemma}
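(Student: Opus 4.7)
The plan is to reduce to $R=1$ by scaling and then use a Stokes representation formula combined with a bootstrap that exploits the smallness of $\epsilon$ to upgrade the decay from $|\Bu|=O(1/|x|)$ to the optimal $|\Bu|=O(1/|x|^{n-2})$. For the scaling step, I define $\tilde{\Bu}(y)=R\Bu(Ry)$ and $\tilde{p}(y)=R^2 p(Ry)$, so that $(\tilde{\Bu},\tilde{p})$ solves \eqref{NS} on $\mathbb{R}^n\setminus B_1$ with $|\tilde{\Bu}(y)|\leq\epsilon/|y|$; proving $|\tilde{\Bu}(y)|\leq C(\epsilon)/|y|^{n-2}$ for this normalized problem and transforming back produces \eqref{2-2} with the factor $R^{n-3}$ arising purely from the scaling. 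From now on I assume $R=1$.

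For the normalized problem I rewrite the momentum equation as the inhomogeneous exterior Stokes system $-\Delta\Bu+\nabla p=-\mathrm{div}(\Bu\otimes\Bu)$, $\mathrm{div}\,\Bu=0$, and use the representation
\begin{equation*}
u_i(x)=u_i^{h}(x)+\int_{|y|>1}\partial_k E_{ij}(x-y)\,u_j(y)u_k(y)\,dy,
\end{equation*}
where $E$ is the Stokeslet (satisfying $|\nabla^m E(z)|\leq C|z|^{2-n-m}$) and $u^{h}$ solves the homogeneous exterior Stokes problem with boundary data $\Bu$ on $|x|=1$ and decay at infinity. Standard exterior Stokes theory (cf.\ \cite{Galdi11}), together with the bound $|\Bu(y)|\leq\epsilon$ for $|y|=1$, yields $|u^{h}(x)|\leq C\epsilon/|x|^{n-2}$ for $|x|\geq 2$. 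I then set $M_N:=\sup_{1\leq|x|\leq N}|x|^{n-2}|\Bu(x)|$ (working with truncations avoids assuming $M_\infty<\infty$ a priori), combine the hypothesis $|\Bu(y)|\leq\epsilon/|y|$ with the localized bound $|\Bu(y)|\leq M_N/|y|^{n-2}$ to get the bilinear control $|\Bu(y)|^2\leq \epsilon M_N/|y|^{n-1}$, and apply the standard weighted convolution estimate
\begin{equation*}
\int_{|y|>1}\frac{1}{|x-y|^{n-1}|y|^{n-1}}\,dy\leq \frac{C}{|x|^{n-2}}
\end{equation*}
(with local regularity handling the piece near the boundary). This yields $|\Bu(x)|\leq C\epsilon/|x|^{n-2}+C\epsilon M_N/|x|^{n-2}$ on $1\leq|x|\leq N$, so $M_N\leq C\epsilon(1+M_N)$; for $\epsilon$ small this gives $M_N\leq 2C\epsilon$ uniformly in $N$, and sending $N\to\infty$ finishes the case $R=1$.

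The main obstacle I anticipate is closing the bootstrap without a priori knowing finiteness of the weighted sup. The key observation is that the nonlinearity $\Bu\otimes\Bu$ is bilinear in $\Bu$, so one factor can always be controlled by the hypothesis $\epsilon/|y|$ while the other feeds the $|y|^{2-n}$ bound we are trying to establish. This produces the crucial multiplicative $\epsilon$ in $M_N\leq C\epsilon+C\epsilon M_N$, and the smallness of $\epsilon$ turns this into a useful estimate rather than a vacuous one; absent the $\epsilon$, the same representation would only give back the $1/|x|$ decay we started with.
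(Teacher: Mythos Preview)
Your scaling reduction to $R=1$ and the general shape of the argument (Stokes representation plus smallness) are fine, but the truncated bootstrap with $M_N$ does not close. The bilinear control $|\Bu(y)|^2\leq\epsilon M_N/|y|^{n-1}$ is only valid on $1\leq|y|\leq N$; on the tail $|y|>N$ you have nothing better than $|\Bu(y)|^2\leq\epsilon^2/|y|^2$. That tail contributes
\[
|x|^{n-2}\,\epsilon^2\!\int_{|y|>N}\frac{dy}{|x-y|^{n-1}|y|^{2}}\;\sim\;\epsilon^2 N^{n-3}
\quad\text{when }|x|\sim N,
\]
so what you actually get is $M_N\leq C\epsilon + C\epsilon M_N + C\epsilon^2 N^{n-3}$, and for $n\geq4$ the last term destroys any uniform-in-$N$ conclusion. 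Put differently, the convolution $|x|^{1-n}\ast|x|^{-2}$ only returns $|x|^{-1}$, so using one factor $\epsilon/|y|$ and the other $M_N/|y|^{n-2}$ cannot be done over the full range of integration, and using $\epsilon/|y|$ on both factors never improves the decay beyond $1/|x|$. A direct bootstrap of $\Bu$ into the weighted space $X_{n-2}$ therefore stalls; your ``key observation'' about splitting the bilinear form is correct algebraically but does not by itself produce the a priori finiteness you need.

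The paper's proof (following Korolev--\v{S}ver\'ak) avoids this trap by separating existence from identification. After extending $\Bu$ to a divergence-free $\tilde\Bu$ on $\mathbb{R}^n$ solving $-\Delta\tilde\Bu+\tilde\Bu\cdot\nabla\tilde\Bu+\nabla\tilde p=\Bf$ with a small, essentially compactly supported $\Bf$, one first \emph{constructs} by contraction a solution $\Bw$ in $X_{n-2}$ to the same equation; here the bilinear estimate $\|B(\mfu,\mfv)\|_{X_{n-2}}\leq C\|\mfu\|_{X_{n-2}}\|\mfv\|_{X_{n-2}}$ suffices because $\Bw$ is built inside $X_{n-2}$ from the start. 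Only afterwards does one show $\tilde\Bu=\Bw$, and crucially this uniqueness step is carried out in $X_1$: the difference $\tilde\Bw=\tilde\Bu-\Bw$ satisfies a linear equation with coefficients of size $O(\epsilon/(1+|x|))$, and the estimate $\|\tilde\Bw\|_{X_1}\leq C\epsilon\|\tilde\Bw\|_{X_1}$ closes precisely because $|x|^{1-n}\ast|x|^{-2}\lesssim|x|^{-1}$ is enough at this stage. The missing idea in your approach is this two-step structure: build a reference solution in the target space, then compare in the weaker space where $\Bu$ already lives.
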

\begin{remark}
	When $n=4$,  Lemma \ref{epsilon-R} is essentially the same as \cite[Lemma 3.2]{JiaSverak17} where  the solutions are required  to have  finite energy additionally. However, when $n=4$, under the  assumption $|\Bu(x)| =O\left( \frac{1}{|x|}\right)$, one can follow the proof in Section \ref{subsec_4d} to prove $\nabla \Bu \in L^2(\mathbb{R}^4\setminus B_1)$.
\end{remark}
\begin{proof}[Proof of Lemma \ref{epsilon-R}] For any $\Bu$ satisfying Navier-Stokes system \eqref{NS} and \eqref{2-1} in $\mathbb{R}^n\setminus B_R$,  $\Bu_R (x) = R \Bu(R x)$ is a solution of  the Navier-Stokes equations on $\mathbb{R}^n \setminus B_1$ and satisfies \eqref{2-1} with $R=1$. If the lemma is true for $R=1$, then one has
	\begin{equation}\label{2-3}
		|\Bu(x)| = R^{-1} |\Bu_R(R^{-1} x ) | \leq C R^{-1} |R^{-1} x|^{2-n} = CR^{n-3} |x|^{2-n}.
	\end{equation}
	Hence we need only to prove the lemma for $R=1$.
	The rest of the proof is inspired by the method developed in \cite{KorolevSverak11} and is divided into four steps. We first consider the solution with zero flux, i.e., \begin{equation}\label{eq:zeroflux}
		\int_{\partial B_\rho} \Bu \cdot \Bn \, d\sigma =0 \ \ \  \mbox{for every}\ \rho>1.
	\end{equation}
	Finally, in Step 4, we prove the lemma without this constraint.
	
	{\it Step 1. Extension of $\Bu$ to $\tilde{\Bu}$ on the whole space $\mathbb{R}^n$.}
	Let $\eta$ be a smooth cut-off function satisfying
	\be \label{4-1}
	\eta(r)= 1\, \,  \text{for}\,\, r\geq \frac52 \ \ \ \mbox{and}\  \ \ \eta(r) = 0\ \  \text{for}\,\, r\leq 2.
	\ee
	Let $\tilde{\Bu} =\eta\Bu+\Bv$, where $\Bv$ has compact support in $B_3$ and solves the divergence equation
	\be \label{4-2}
	{\rm div}\,\Bv = - \Bu \cdot \nabla \eta.
	\ee
	Indeed,  such $\Bv$ can be constructed via the Bogovskii formula as long as the zero flux condition \eqref{eq:zeroflux} holds, one may refer to \cite[Chapter III.3]{Galdi11} for details. Moreover, such $\Bv$ is smooth and compactly supported in $B_3$  as $\Bu \cdot \nabla \eta$ is smooth and of compact support in $B_3$ (cf. \cite[Theorem III.3.3]{Galdi11}).
 Therefore, $\tilde{\Bu}$ satisfies
	\begin{equation}
		-\Delta\tilde{\Bu}+\tilde{\Bu}\cdot \nabla \tilde{\Bu}+\nabla {\tilde{p}} ={\Bf},~ \ \ \rm div~\tilde{\Bu}=0.
	\end{equation}
	for some smooth ${\Bf}$ compactly supported in  $B_3$ and $\tilde{p} = \eta p$. In fact,
	\begin{equation}\nonumber
		\Bf = -2 \nabla \eta\cdot \nabla \Bu - \Delta\eta \Bu
		- \Delta \Bv + (\eta \Bu \cdot \nabla)((\eta -1) \Bu) +
		(\eta \Bu \cdot \nabla)\Bv + (\Bv \cdot \nabla) (\eta \Bu)+ (\Bv\cdot \nabla )\Bv + p \nabla \eta.
	\end{equation}
	
	One can just follow the proof of   \cite[Lemma X.9.2]{Galdi11} to prove that
	\begin{equation}\label{4-1-1}
		|\nabla^l \Bu(x)| \leq \frac{C(l)\epsilon}{|x|^{l+1}}, \ \ \  x\in \mathbb{R}^n\setminus {B_2}, \ \ l\in \mathbb{N}.
	\end{equation}
	It follows from  the properties of the  Bogovskii operator (cf. \cite[Theorem III.3.3]{Galdi11}), the Sobolev embedding and \eqref{4-1-1} that  $\Bv$ satisfies
	\begin{equation}\label{60-0}
		\|\nabla^l \Bv\|_{L^{\infty}}\leq C(l) \epsilon.
	\end{equation}
		Following the same lines as that in the proof of  Lemma \ref{lemma21}, one can get the estimate for the pressure term,
		\begin{equation}\label{4-1-2}
			|p(x)| \leq \frac{C\epsilon}{|x|^2},\ \  \ \ x\in \mathbb{R}^n \setminus B_2.
		\end{equation}
		In summary, \eqref{4-1-1}-\eqref{4-1-2} give that
		\begin{equation}\label{eq:decayfortildeu}
			|\tilde{\Bu}(x)|\leq \frac{C \epsilon}{1+|x|}.
		\end{equation}
		and
		\begin{equation}\label{4-1-3}
			\|\Bf \|_{L^\infty}\leq C\epsilon.
		\end{equation}
		
		{\it Step 2. Construction of a solution $\Bw$ which decays as $|x|^{2-n}$.} The major goal of this step is to  show that for any $\Bf$ compactly supported in  $B_3$ with $\|\Bf\|_{L^{\infty}} \leq\epsilon_0$ for some absolute constant $\epsilon_0$, one can construct a solution $\Bw$ of the system
		\begin{equation*}
			-\Delta \Bw+ \Bw\cdot \nabla \Bw +\nabla {q} ={\Bf}, ~ \ \ {\rm div~} \Bw=0,
		\end{equation*}
		which satisfies \begin{equation}\label{eq:decay for w}
			|\Bw(x)|\leq \frac{C\epsilon_0}{(1+|x|)^{n-2}}.
		\end{equation}
		We construct such $\Bw$ by a fixed point argument.
		Let $G =\left( G_{ij} \right)$ be the Green tensor of the linear Stokes operator. One has (cf. \cite{Galdi11})
		\[
		| G(x)|\leq \frac{C}{|x|^{n-2}}\quad \text{and}\quad |\nabla G(x)|\leq \frac{C}{|x|^{n-1}}.
		\]
		
		For $\alpha>0$,	 let
		\[
		X_\alpha=\left\{\Bw: \ \Bw\in C^1(\mathbb{R}^n; \mathbb{R}^n), \ {\rm div}\,\Bw =0, \ \sup_{x\in \mathbb{R}^n} (1+|x|)^{\alpha}|\Bw(x)| <\infty\right\}.
		\]
		and \[
		\|\Bw\|_{X_{\alpha}} =\sup_{x\in \mathbb{R}^n}(1+|x|)^{\alpha}|\Bw(x)|.
		\]
		We will seek a solution in $X_{n-2}$  by a fixed point argument
		for the equivalent equation
		\begin{equation}\label{fixeqw}
			\Bw= -B(\Bw, \Bw)+G*{\Bf},
		\end{equation}
		where for $\mfu$, $\mfv\in X_{n-2}$,
		\[
		B(\mfu,\mfv)=G*\textrm{div}(\mfu\otimes\mfv).
		\]
		Note that
		\begin{equation} \nonumber \begin{aligned}
				|G*{\Bf}(x)| & \leq C \int_{\mathbb{R}^n} \frac{1}{|x-y|^{n-2}} |\Bf(y)| \, dy 
				 \leq C \epsilon_0 \int_{B_3} \frac{1}{|x-y|^{n-2}} \, dy   \leq \frac{C\epsilon_0}{(1+ |x|)^{n-2}}.
			\end{aligned}
		\end{equation}
		Under the smallness assumption on ${\Bf}$, the fixed point will be achieved if we can prove
		\begin{equation}\label{eq:bilinear}
			\|B(\mfu, \mfv)\|_{X_{n-2}}\leq C\|\mfu\|_{X_{n-2}}\|\mfv\|_{X_{n-2}}.
		\end{equation}
		Indeed, integration by parts yields
		\begin{equation}\label{eq:bouofbili}
			\begin{aligned}
				|B(\mfu, \mfv)(x)|&\leq C\int_{\mathbb{R}^n}\frac{1}{|x-y|^{n-1}}|\mfu(y)\otimes\mfv(y)|\, dy\\
				&\leq C\|\mfu\|_{X_{n-2}}\|\mfv\|_{X_{n-2}}\int_{\mathbb{R}^n}\frac{1}{|x-y|^{n-1}}\frac{1}{(1+|y|)^{2(n-2)}}\, dy.\\
			\end{aligned}
		\end{equation}
		Define
		\[
		I(x)= \int_{\mathbb{R}^n}\frac{1}{|x-y|^{n-1}}\frac{1}{(1+|y|)^{2(n-2)}}\,dy.
		\]
		Clearly, $I(x)$ is bounded for $|x|\leq 1$. For $|x|> 1$, let $x=te$ with $|e|=1$ and $t=|x|> 1$, then let   $y = tz$ to obtain
		\begin{equation*}
			\begin{aligned}
				I(x)= & t^{-(2n-5)}\int_{\mathbb{R}^n}\frac{1}{|e-z|^{n-1}}\frac{1}{(t^{-1}+|z|)^{2(n-2)}}\,dz\\
				\leq & t^{-(2n-5)}\int_{|z|\leq \frac{1}{2}}\frac{1}{|e-z|^{n-1}}\frac{1}{(t^{-1}+|z|)^{2(n-2)}}\,dz\\
		&+ t^{-(2n-5)}\int_{|z|\geq \frac{1}{2}}\frac{1}{|e-z|^{n-1}}\frac{1}{(t^{-1}+|z|)^{2(n-2)}}\,dz \\
    \leq & Ct^{-(2n-5)} \left( \int_0^{\frac12} \frac{1}{(t^{-1} + r)^{2(n-2)}} \cdot r^{n-1}\, dr 
     + \int_{|z|\geq \frac{1}{2}}\frac{1}{|e-z|^{n-1}}\frac{1}{|z|^{2(n-2)}}\, dz \right)\\ 
    =&  Ct^{-(2n-5)}\left( \int_0^{\frac12} \frac{1}{(t^{-1} + r)^{2(n-2)}} \cdot r^{n-1}\, dr 
     + \int_{|\tilde{z}|\geq \frac{1}{2}}\frac{1}{|e_1-\tilde{z}|^{n-1}}\frac{1}{|\tilde{z}|^{2(n-2)}}\, d\tilde{z} \right)  \\
				\leq & C(n)t^{-(2n-5)}t^{n-3} + C(n)t^{-(2n-5)} \\
    = & C(n)t^{2-n}+C(n)t^{-(2n-5)},
			\end{aligned}
		\end{equation*}
  where $e_1=(1,0,0,\cdots,0)$ and we have used the rotation invariance of the domain $\{|z|\geq 1/2\}$ and the associated integral.
		Combining the boundedness of $I(x)$ when $|x|\leq 1$  and the above equation for $|x|> 1$, we have
		\begin{equation*}
			(1+|x|)^{n-2}I(x)\leq C ~\ \ \textrm{for all } x\in \mathbb{R}^n.
		\end{equation*}
		This, together with \eqref{eq:bouofbili}, shows that \eqref{eq:bilinear} holds.
		The classical fixed point theorem  (cf. \cite[Lemma 2]{KorolevSverak11}) gives the desired $\Bw$ satisfying \eqref{eq:decay for w} and \eqref{fixeqw}.

		{\it Step 3. Equality of $\tilde{\Bu}$ and $\Bw$.} Let $\Bf$ in  {Step 2} be constructed via {Step 1} and let $\epsilon$ small enough so that $\|{\Bf}\|_{L^{\infty}}\leq \epsilon_0$ is satisfied.
		Let $\tilde{\Bw} = \tilde{\Bu}-\Bw$, we will show that $\tilde{\Bw}=0$ if $\epsilon$ is small enough. The straightforward computations show that $\tilde{\Bw}$ satisfies
		\begin{equation*}
			-\Delta \tilde{\Bw}+  \tilde{\Bw}\cdot\nabla {\Bw} + \tilde{\Bu}\cdot \nabla \tilde{\Bw}+\nabla(\tilde{p}-q)=0.
		\end{equation*}
		It follows from \eqref{eq:decayfortildeu} and \eqref{eq:decay for w} that one has
		\begin{equation*}
			|\tilde{\Bw}(x)|\leq\frac{C\epsilon}{1+|x|}.
		\end{equation*}
		Note \begin{equation*}
			\tilde{\Bw}(x)= G*\textrm{div}(\tilde{\Bw}\otimes\Bw+ \tilde{\Bu}\otimes\tilde{\Bw}).
		\end{equation*}
		We have
		\begin{equation*}
			\begin{aligned}
				| G*\textrm{div}(\tilde{\Bw}\otimes\Bw+ \tilde{\Bu}\otimes\tilde{\Bw})(x)|&\leq
				C\epsilon\|\tilde{\Bw}\|_{X_1}\int_{ \mathbb{R}^n }\frac{1}{|x-y|^{n-1}}\frac{1}{(1+|y|)^2}dy\\
			\end{aligned}
		\end{equation*}
		It is bounded by $C\epsilon\|\tilde{\Bw}\|_{X_1}$ for $|x|\leq 1$. For $|x|> 1$,  we let $x=te$ with $|e|=1$ and $t=|x|>1$, and make the  change of variables $y = tz$ to obtain (with $n\geq 4$)
		\begin{equation*}
			\begin{aligned}
				| G*\textrm{div}(\tilde{\Bw}\otimes\Bw+ \tilde{\Bu}\otimes\tilde{\Bw})(x)|\leq & C\epsilon\|\tilde{\Bw}\|_{X_1}\frac{1}{|x|}\int_{ \mathbb{R}^n }\frac{1}{|e-z|^{n-1}}\frac{1}{(t^{-1}+|z|)^2}dz\\
    = & C\epsilon\|\tilde{\Bw}\|_{X_1}\frac{1}{|x|}\int_{ \mathbb{R}^n }\frac{1}{|e_1-\tilde{z}|^{n-1}}\frac{1}{(t^{-1}+|\tilde{z}|)^2}d\tilde{z}\\
				\leq&  C \epsilon\frac{\|\tilde{\Bw}\|_{X_1}}{|x|}.
			\end{aligned}
		\end{equation*}
		This leads to
		\begin{equation*}	\|\tilde{\Bw}\|_{X_1}\leq C \epsilon\|\tilde{\Bw}\|_{X_1}
		\end{equation*}
		and implies that $\tilde{\Bw}\equiv0$ if $\epsilon$ small enough so that $C \epsilon<1$. This finishes the proof of the lemma if $\Bu$ satisfies the zero flux condition \eqref{eq:zeroflux}.
		
		{\it Step 4. Removal of the zero flux constraint.}
		If the flux is non-zero,
		define
		\[
		\boldsymbol{a}=c\frac{x}{|x|^{n}}\quad \text{where}\quad c=\frac{1}{|\partial B_1|}\int_{\partial B_1} \Bu \cdot \Bn\, d\sigma.
		\]
		Note $\textrm{div}\,\boldsymbol{a}=0$ and $\Delta\,\boldsymbol{a}=0$.
		Moreover, one can directly check
		$\boldsymbol{a}$ satisfies the Navier-Stokes system \eqref{NS} with the associated
		pressure $p_{\boldsymbol{a}}= -\frac{1}{2}|\boldsymbol{a}|^2$ in $\mathbb{R}^n\setminus\{0\}$.
		Let $\Bu=\Bu_1 +\boldsymbol{a}$ so that $\Bu_1$ satisfies the  zero flux  condition.
		We  can now extend $\Bu_1$ to a divergence-free vector field $\tilde{\Bu}_1$ on $\mathbb{R}^n$ similar to {Step 1}.
		It can be verified that $\tilde{\Bu}_1$ satisfies
		\begin{equation}
			-\Delta \tilde{\Bu}_1 + \tilde{\Bu}_1 \cdot \nabla \tilde{\Bu}_1 + \nabla (\eta p - \eta p_{\boldsymbol{a}} ) =
			\Bf \ \  \ \mbox{in}\ \mathbb{R}^n,
		\end{equation}
		where
		\begin{equation}\nonumber
			\begin{aligned}
				\Bf  = &-2 \nabla \eta\cdot \nabla \Bu_1 - \Delta \eta \Bu_1
				- \Delta \Bv + (\eta \Bu_1 \cdot \nabla)((\eta -1) \Bu_1) +
				(\eta \Bu_1 \cdot \nabla)\Bv + (\Bv \cdot \nabla) (\eta \Bu_1)\\ &\ \ + (\Bv\cdot \nabla )\Bv
				+( p-p_{\boldsymbol{a}}) \nabla \eta -(\eta \Bu\cdot \nabla) \boldsymbol{a} - (\eta \boldsymbol{a} \cdot \nabla) \Bu.
			\end{aligned}
		\end{equation}
		Although $\Bf$ is not compactly supported in $B_3$, it also holds that
		\begin{equation}\nonumber
			|\Bf(x)| \leq \frac{ C \epsilon}{(1+ |x|)^{n+1}}
		\end{equation}
		and
		\begin{equation}\nonumber
			|G*\Bf(x)| \leq \frac{C\epsilon}{(1+|x|)^{n-2}}.
		\end{equation}
		The proof follows the same lines as above, so we omit the details and finish the proof of the lemma.
\end{proof}

\appendix
\section{An elementary lemma} \label{appen1}

This appendix is devoted to
an elementary yet quite useful lemma, which is about the decay rate at infinity and the blow-up rate at the origin of a non-negative integrable function compared with $\frac{1}{r}$, and has been used several times in the proof of Theorem \ref{main}.

\begin{lemma}\label{lem:fasterdecay}
	Let $f(x)$ be a non-negative function on $(0,+\infty)$, if
	\begin{equation*}
		\int_{0}^{+\infty}f(x)\, dx<+\infty,
	\end{equation*}
	then \begin{equation*}
		\liminf_{x\to +\infty}xf(x)=\liminf_{x\to 0}xf(x)=0.
	\end{equation*}
\end{lemma}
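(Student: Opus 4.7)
The plan is to prove both statements by contradiction, and both use essentially the same idea: a positive lower bound on $xf(x)$ would force $f$ to be at least a constant multiple of $1/x$ near the relevant endpoint, which is not integrable there.

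First I would handle the infinity case. Suppose for contradiction that $L := \liminf_{x\to +\infty} xf(x) > 0$. By the definition of $\liminf$, applied with $\varepsilon = L/2$, there exists $X > 0$ such that $xf(x) > L/2$ for all $x > X$. Then
\begin{equation*}
  \int_X^{\infty} f(x)\, dx \;\geq\; \int_X^{\infty} \frac{L}{2x}\, dx \;=\; +\infty,
\end{equation*}
contradicting the assumed integrability of $f$ on $(0,+\infty)$. Hence $\liminf_{x\to+\infty} xf(x) = 0$.

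Next I would do the origin case in a parallel way. Suppose $M := \liminf_{x\to 0^+} xf(x) > 0$. Then there exists $\delta \in (0,1)$ such that $xf(x) > M/2$ for all $x \in (0,\delta)$. This gives
\begin{equation*}
  \int_0^{\delta} f(x)\, dx \;\geq\; \int_0^{\delta} \frac{M}{2x}\, dx \;=\; +\infty,
\end{equation*}
again contradicting integrability. Therefore $\liminf_{x\to 0^+} xf(x) = 0$ as well.

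There is no real obstacle here; the only subtlety is to invoke the correct definition of $\liminf$, namely that $\liminf_{x\to a} g(x) > L$ implies $g(x) > L$ on a full one-sided neighborhood of $a$ (not merely along a subsequence). Everything else is just noting that $1/x$ fails to be integrable at both $0$ and $\infty$. The non-negativity of $f$ is used to turn the pointwise lower bound into a lower bound on the integral.
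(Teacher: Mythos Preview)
Your proof is correct and essentially identical to the paper's own argument: both proceed by contradiction, use the definition of $\liminf$ to get a lower bound $xf(x)\geq c/2$ on a one-sided neighborhood of the endpoint, and then compare with the non-integrability of $1/x$. The only cosmetic difference is that the paper writes out just the $x\to+\infty$ case and remarks that the $x\to 0$ case is similar, whereas you spell out both.
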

\begin{proof}
	We prove the lemma by a contradiction argument. If
	\begin{equation*}
		\liminf_{x\to +\infty}xf(x)=c>0,
	\end{equation*}
	then there exists $N>0$ such that
	\begin{equation*}
		xf(x)\geq \frac{c}{2}, \textrm{ for any } x>N.
	\end{equation*}
	This implies that
	\begin{equation*}
		\int_{0}^{+\infty}f(x)\, dx\geq \int_{N}^{+\infty} \frac{c}{2x} \, dx=+\infty,
	\end{equation*}
	and leads to a contraction. Hence one has \begin{equation*}
		\liminf_{x\to +\infty}xf(x)=0.
	\end{equation*}
	The case $\liminf_{x\to 0}xf(x)=0$ can be proved similarly. Therefore, the proof of the lemma is completed.
\end{proof}

{\bf Acknowledgement.}
The research of Gui is partially supported by NSF grant DMS-1901914. The research of Wang is partially supported by NSFC grants 12171349 and 12271389. The research of  Xie is partially supported by  NSFC grants 11971307 and 1221101620,  Natural Science Foundation of Shanghai 21ZR1433300,  and Program of Shanghai Academic Research Leader 22XD1421400.


\begin{thebibliography}{10}

\bibitem{CKN}
Luis Caffarelli, Robert Kohn, and Louis Nirenberg.
\newblock Partial regularity of suitable weak solutions of the Navier-Stokes equations. \newblock{\em 
Comm. Pure Appl. Math.}, 35: 771--831, 1982.
 
	\bibitem{FrehseRuzicka94}
	Jens Frehse and Michael R{\r u}\v{z}i\v{c}ka.
	\newblock Regularity for the stationary {N}avier-{S}tokes equations in bounded
	domains.
	\newblock {\em Arch. Rational Mech. Anal.}, 128(4):361--380, 1994.
	
	\bibitem{FrehseRuzicka96}
	Jens Frehse and Michael R{\r u}\v{z}i\v{c}ka.
	\newblock Existence of regular solutions to the steady {N}avier-{S}tokes
	equations in bounded six-dimensional domains.
	\newblock {\em Ann. Scuola Norm. Sup. Pisa Cl. Sci. (4)}, 23(4):701--719
	(1997), 1996.
	
	\bibitem{FrehseRuzicka98}
	Jens Frehse and Michael R{\r u}{\v{z}}i{\v{c}}ka.
	\newblock A new regularity criterion for steady {N}avier-{S}tokes equations.
	\newblock {\em Differential Integral Equations}, 11(2):361--368, 1998.
	
	\bibitem{Galdi11}
	Giovanni~P. Galdi.
	\newblock {\em An introduction to the mathematical theory of the
		{N}avier-{S}tokes equations}.
	\newblock Springer Monographs in Mathematics. Springer, New York, second
	edition, 2011.
	
	
	
		\bibitem{GuillodWittwer15SIMA}
	Julien Guillod and Peter Wittwer.
\newblock {Generalized scale-invariant solutions to the two-dimensional stationary Navier-Stokes equations}.
\newblock{\em SIAM J. Math. Anal.}, 47:  955--968, 2015.
	
	\bibitem{GuillodWittwer15}
	Julien Guillod and Peter Wittwer.
	\newblock Asymptotic behaviour of solutions to the stationary {N}avier-{S}tokes
	equations in two-dimensional exterior domains with zero velocity at infinity.
	\newblock {\em Math. Models Methods Appl. Sci.}, 25(2):229--253, 2015.
	
	\bibitem{JiaSverak17}
	Hao Jia and Vladim{\'{\i}}r {\v{S}}ver{\'{a}}k.
	\newblock Asymptotics of stationary {N}avier-{S}tokes equations in higher
	dimensions.
	\newblock {\em Acta Mathematica Sinica, English Series}, 34(4):598--611,
	2017.
	
	\bibitem{KimKozono06}
	Hyunseok Kim and Hideo Kozono.
	\newblock A removable isolated singularity theorem for the stationary
	{N}avier-{S}tokes equations.
	\newblock {\em J. Differential Equations}, 220(1):68--84, 2006.
	
	\bibitem{KPR20JMFM}
	Mikhail~V. Korobkov, Konstantin Pileckas, and Remigio Russo.
	\newblock A simple proof of regularity of steady-state distributional solutions
	to the {N}avier-{S}tokes equations.
	\newblock {\em J. Math. Fluid Mech.}, 22(4):Paper No. 55, 8, 2020.
	
	\bibitem{KorolevSverak11}
	Alexander Korolev and Vladim\'{\i}r \v{S}ver\'{a}k.
	\newblock On the large-distance asymptotics of steady state solutions of the
	{N}avier-{S}tokes equations in 3{D} exterior domains.
	\newblock {\em Ann. Inst. H. Poincar\'{e} C Anal. Non Lin\'{e}aire},
	28(2):303--313, 2011.
	
	\bibitem{Landau}
	L.~D. Landau and E.~M. Lifshitz.
	\newblock {\em Fluid mechanics}.
	\newblock Pergamon Press, Oxford, second edition, 1987.
	
	
	\bibitem{LiYang22}
	YanYan Li and Zhuolun Yang.
	\newblock Regular solutions of the stationary {N}avier-{S}tokes equations on
	high dimensional {E}uclidean space.
	\newblock {\em Comm. Math. Phys.}, 394(2):711--734, 2022.

 \bibitem{Lin}
 Fanghua Lin. \newblock A new proof of the Caffarelli-Kohn-Nirenberg theorem.
\newblock{\em 
Comm. Pure Appl. Math.}, 51: 241--257, 1998.
	
	\bibitem{MiuraTsai12}
	Hideyuki Miura and Tai-Peng Tsai.
	\newblock Point singularities of 3{D} stationary {N}avier-{S}tokes flows.
	\newblock {\em J. Math. Fluid Mech.}, 14(1):33--41, 2012.

 \bibitem{PileckasRusso17}
 Konstantin Pileckas and Remigio Russo. 
 \newblock Asymptotics of solutions with vanishing at infinity velocity of the steady-state {N}avier-{S}tokes equations in the dimension four.
 \newblock {\em J. Math. Fluid Mech.}, 19(4):725--737, 2017.

 
	\bibitem{Shi18}
	Zuoshunhua Shi.
	\newblock Self-similar solutions of stationary {N}avier-{S}tokes equations.
	\newblock {\em J. Differential Equations}, 264(3):1550--1580, 2018.
	
	\bibitem{Struwe95}
	Michael Struwe.
	\newblock Regular solutions of the stationary {N}avier-{S}tokes equations on
	{$\bold R^5$}.
	\newblock {\em Math. Ann.}, 302(4):719--741, 1995.
	

	\bibitem{Sverak11}
	Vladim\'{\i}r \v{S}ver\'{a}k.
	\newblock On {L}andau's solutions of the {N}avier-{S}tokes equations.
	\newblock volume 179, pages 208--228. 2011.
	\newblock Problems in mathematical analysis. No. 61.
	
	\bibitem{SverakTsai00}
	Vladim\'{\i}r \v{S}ver\'{a}k and Tai-Peng Tsai.
	\newblock On the spatial decay of 3-{D} steady-state {N}avier-{S}tokes flows.
	\newblock {\em Comm. Partial Differential Equations}, 25(11-12):2107--2117,
	2000.
	
		\bibitem{Tian98}
	Gang Tian and Zhouping Xin.
	\newblock One-point singular solutions to the {N}avier-{S}tokes equations.
	\newblock {\em Topol. Methods Nonlinear Anal.}, 11(1):135--145, 1998.
	
	\bibitem{TianXin99}
	Gang Tian and Zhouping Xin.
	\newblock Gradient estimation on {N}avier-{S}tokes equations.
	\newblock {\em Comm. Anal. Geom.}, 7(2):221--257, 1999.
	
	\bibitem{Tsai98b}
	Tai-Peng Tsai.
	\newblock {\em On problems arising in the regularity theory for the
		{N}avier-{S}tokes equations}.
	\newblock ProQuest LLC, Ann Arbor, MI, 1998.
	\newblock Thesis (Ph.D.)--University of Minnesota.
	
	\bibitem{Tsai18}
	Tai-Peng Tsai.
	\newblock {\em Lectures on {N}avier-{S}tokes equations}, volume 192 of {\em
		Graduate Studies in Mathematics}.
	\newblock American Mathematical Society, Providence, RI, 2018.
	
	
\end{thebibliography}
\end{document}